\documentclass[10pt,twoside,reqno]{amsart}

\usepackage[
paperwidth=510.236pt,
paperheight=751.181pt,
inner=70pt,
outer=70pt,
top=77pt,
bottom=65pt,
headheight=12pt,
headsep=12pt
]{geometry}

\usepackage[T1]{fontenc}
\usepackage[utf8]{inputenc}
\usepackage{lmodern}
\usepackage{microtype}

\usepackage{amsmath,amssymb,amsfonts,amsthm}
\usepackage{xcolor}
\usepackage{graphicx,enumerate}
\usepackage{textcase}
\usepackage{fancyhdr}
\usepackage[colorlinks=true,linkcolor=black,citecolor=black,urlcolor=black]{hyperref}
\usepackage[nameinlink,noabbrev]{cleveref}

\newcommand{\ShortTitle}{}
\newcommand{\ShortAuthors}{\hfill \textsc{L. Bouthat, N. Doyon, J. Mashreghi, F. Morneau-Guerin}}

\fancypagestyle{firstpage}{%
	\fancyhf{}%
}

\newcommand{\Header}{%
	\thispagestyle{firstpage}%
	\,\par
	\,\par
	\vspace{-10pt}
}

\newcommand{\Title}[1]{%
	\begin{center}
		{\large\bfseries\MakeTextUppercase{#1}\par}
	\end{center}
	\vspace{15pt}
}

\newcommand{\Authors}[1]{%
	\begin{center}
		{\scshape #1\par}
	\end{center}
	\vspace{18pt}
}

\newcommand{\Abstract}[1]{%
	\begingroup
	\small
	\noindent\textbf{Abstract:} #1\par
	\endgroup
	\vspace{15pt}
}

\newcommand{\MSC}[1]{%
	\begingroup
	\small
	\noindent\textbf{2020 Mathematics Subject Classification:} #1.\par
	\endgroup
}

\newcommand{\Keywords}[1]{%
	\begingroup
	\small
	\noindent\textbf{Key words:} #1.\par
	\endgroup
	\vspace{18pt}
}

\numberwithin{equation}{section}

\theoremstyle{plain}
\newtheorem{thm}[equation]{Theorem}
\newtheorem*{Perron}{Perron--Frobenius theorem}
\newtheorem{lem}[equation]{Lemma}

\newtheorem{cor}[equation]{Corollary}

\theoremstyle{definition}

\theoremstyle{remark}

\newcommand{\ds}{\Omega_n}

\begin{document}
	
	\Header
	
	\Title{On the convergence\\of doubly stochastic Markov chains}
	
	\Authors{%
		Ludovick Bouthat, Nicolas Doyon, Javad Mashreghi,\\
		and Frédéric Morneau-Guérin%
	}
	
	\Abstract{%
		We characterize the asymptotic behavior of time-homogeneous doubly stochastic Markov
		chains. Our investigation revolves around understanding the dynamics of products of
		doubly stochastic matrices, which in turn allows us to fully characterize three distinct
		behaviors: cyclicity, convergence towards a special equilibrium matrix, and divergence.
		Notably, we introduce a novel and comprehensive sufficient condition for the convergence
		of an infinite product of doubly stochastic matrices.
	}
	
	\MSC{15B51, 60J10}
	
	\Keywords{doubly stochastic matrices, Markov chains}

%%%%%%%%%%%%%%%%%%%%%%%%%%%%%%%%%%%%%%%%%%%%%%%%%%%%%%%
%%%%%%%%%%%%%%%%%%%%%%%%%%%%%%%%%%%%%%%%%%%%%%%%%%%%%%%
%%%%%%%%%%%%%%%%%%%%%%%%%%%%%%%%%%%%%%%%%%%%%%%%%%%%%%%
%%%%%%%%%%%%%%%%%%%%%%%%%%%%%%%%%%%%%%%%%%%%%%%%%%%%%%%

\section{Introduction}

A discrete-time Markov chain is a sequence $X_0, X_1,  X_2,  \dots$, or $(X_n)_{n \geq 0}$, of random variables, the so-called \textit{states}, such that the probability of moving to a future state is independent of the previous states and only depends on the current state; this ``memoryless" property is referred to as the Markov property. The possible values of $X_i$s form an at most countable set $S$ of elements $s_j$, called the state space of the chain.

Markov chains are characterized by their transition matrices at each discrete time step $k$, where the product of these matrices describes the probability of transitioning from one state to another within a given number of time steps. Specifically, if $P_k$ is the transition matrix of the Markov chain at time $k$, the entry $(i,j)$ in the matrix $P_1 P_2 \cdots P_m$ represents the probability that $X_m = s_j$, given that $X_0 = s_i$. In cases where the state space is finite, the transition matrices are stochastic matrices, i.e.,  square matrices with nonnegative real entries and with each row summing to 1. A Markov chain is said to be time-homogeneous, or simply homogeneous, when there exists a stochastic matrix $P$ such that $P_k = P$ for all $k \geq 1$. Otherwise, it is said to be non-homogeneous. For detailed information on Markov chains and stochastic processes, we refer to the excellent book by Levin, Peres and Wilmer \cite{Levin2009}.  For an accessible yet detailed discussion of finite Markov chains (including a study of some of the issues that will be discussed in this article but in a slightly different setting), we refer the reader to \cite{Kemeny}.

Building on this setup, the central object becomes the infinite product of transition matrices, whose convergence has been studied extensively. Indeed, Wolfowitz \cite{Wolfowitz} established in 1963 sufficient conditions for the convergence of products of indecomposable and aperiodic stochastic matrices. His results ensured that, under these conditions, the limiting matrix has identical rows, corresponding to a unique stationary distribution. Building on this, Hajnal \cite{Hajnal} extended in 1976 the analysis to products of general nonnegative matrices, proving that under wide conditions, the product of a large number of square non-negative matrices, independent of order, is close to a positive matrix of rank 1. A year later, Chatterjee and Seneta \cite{Chatterjee} refined these ideas in the context of repeated averaging, proving that under suitable connectivity and aperiodicity assumptions, successive products of stochastic matrices converge to a rank-one stochastic matrix, thereby formalizing the emergence of consensus in distributed systems.

In some applications, we require the transition matrices of Markov chains to be \emph{doubly stochastic}. For example, such matrices appear in the study of gossip algorithms by Boyd et al. \cite{Boyd2006} and in the analysis of non-selective quantum measurements by Vourdas \cite{VOURDAS2022126911}.

Recall that an $n\times n$ matrix is doubly stochastic if all of its entries are nonnegative real numbers and if each row and column sums to 1. Birkhoff’s theorem \cite[Theorem 8.7.2]{HornJohnson2013} provides a geometric characterization of doubly stochastic matrices, asserting that every such matrix can be written as a convex combination of $n \times n$ permutation matrices. The associated Birkhoff--von Neumann algorithm offers a constructive procedure for obtaining this decomposition. In the present paper, we shall denote by $\ds$ the set of all $n \times n$ doubly stochastic matrices. Note that $\ds$ is closed under classical matrix multiplication. Therefore, given a doubly stochastic Markov chain (i.e., a Markov chain with transition matrices $D_k \in \ds$ for all $k$), any matrix given by a  product of the form $D_1 D_2 \cdots D_m$ is again doubly stochastic.

Doubly stochastic Markov chains received considerable interest in the latter half of the 20th century. However, since the literature on doubly stochastic Markov chains is very varied (both in terms of the language used and the specialized jargon employed), it has proven difficult to compile a comprehensive list of known results and to fully trace their origins. Nevertheless, we will endeavor to provide a substantial, if incomplete, overview in the following.

In 1965, V.\! M.\ Maksimov first turned his attention to homogeneous doubly stochastic Markov chains. He considered the convergence of $D^m$ as $m\to\infty$ for $D\in\ds$, and obtained some results relating to the convergence of such limits which depends on the Birkhoff decomposition of $D$ \cite{Maksimov1965} (see \cite{Birkhoff1946} for more detail on the Birkhoff decomposition). In view of this line of research, it does not seem unreasonable to assume that Maksimov as well as some of his contemporaries were aware of the following key result, even though the earliest written mention of it that we have been able to trace is in a 1979 book by Fritz, Huppert and Willems \cite[p.\,30]{Fritz1979}: given $D\in\ds$ with $1$ as a simple eigenvalue and such that $\lim_{m\to\infty} D^m$ exists, then
\begin{align}\label{convergence_simple}
    \lim_{m\to\infty} D^m \,=\, \frac{1}{n}\begin{bmatrix}
        1&1&\!\!\cdots\!\!&1 \\
        1&1&\!\!\cdots\!\!&1 \\[-4pt]
        \vdots&\vdots&\!\!\ddots\!\!&\vdots \\
        1&1&\!\!\cdots\!\!&1
    \end{bmatrix}_{n \times n} \,=:\, J_{n} .
\end{align}

In 1970, Maksimov delved deeper into this issue and considered the case of non-homogeneous doubly stochastic Markov chains \cite{Maksimov1970}. In this context, it is the associated sequence of transition matrices $(D_k)_{k\geq 1}$ that is considered. A sequence of such matrices is said to be \emph{convergent} if, for any $k \geq 1$, the matrix product $D_k D_{k+1} \cdots D_m = B_{k,m}$, converges as $m\to\infty$ to some $B_k$. If all these limit matrices $B_k$ coincide (that is, if $B_k$ is independent of $k$), then we say that the chain $(D_k)_{k\geq 1}$ is \emph{properly convergent}. 

Using this notation, Maksimov provided necessary and sufficient conditions ensuring convergence and proper convergence of the infinite product of a given sequence $(D_k)_{k\geq 1} \subseteq \ds$. However, these general conditions relied heavily on the structure of the zero entries in the doubly stochastic matrices. Thus, these conditions prove most useful when the $D_k$'s are connected by some regularity law. For the general case, Maksimov was able to provide a convenient sufficient conditions for (proper) convergence. In particular, he proved that if $d_{ii}^{(k)} \geq \varepsilon > 0$ hold uniformly (i.e., for all $i$ and $k \geq 1$), and if $\sum_{k=1}^\infty d_{ij}^{(k)} + d_{ji}^{(k)}=\infty$ for all $i$ and $j$, then $(D_k)_{k\geq 1}$ converges properly.

In 1980, Schwarz took interest in the non-homogeneous problem and tried to improve upon the sufficient condition given by Maksimov \cite{Schwarz1980}. Therein, he showed among other things that if a doubly stochastic Markov chain $(D_k)_{k\geq 1}$ satisfies the condition $\sum_{k=1}^\infty \nu(D_k)=\infty$, where $\nu(D):= \min_{i,j} d_{ij}$, then the infinite product $D_1 D_2 D_3 \cdots$ converges (in the sense that $L= \lim\limits_{i \rightarrow \infty} D_1 \cdots D_i$ exists) to the uniform matrix $J_n$.

Then, in 1987, Du \cite{Du1987} provided further insight by showing that if $D$ is an irreducible $n\times n$ doubly stochastic matrix and $\operatorname{tr}(D) > 0$, then the limit $\lim_{m\to \infty} D^m$ exists and is equal to $J_n$. Fast forward a few years, in 2001, Hwang and Pyo addressed a similar problem by characterizing the doubly stochastic matrices $D$ whose powers $D^m$, $m \geq 1$, eventually stop, i.e., $D^m=D^p$, $m \geq p$, for some positive integer $p$ \cite{Hwang2001}. Somewhat more recently, in 2009, Rénier determined the limits of the powers of locally finite infinite doubly stochastic matrices \cite{Renier2009}. Finally, we note that in 2010, Hunter \cite{Hunter2010} studied the stationary distribution, the generalized inverses of Markovian kernels, the mean and variance of the first passage times and the expected times to mixing of Markov chains with doubly stochastic transition matrices, which are some of the more important properties of Markov chains.

Recent results on the geometry of the set of doubly stochastic matrices gave some insight which allowed us to improve upon some of the known sufficient conditions mentioned above \cite{bmm1, bmm2}. It is the main purpose of this paper to present this new result. 

Additionally, we have identified what appears to be two inaccuracies relayed recently in the literature. Indeed, in \cite[Theorem 1]{Hunter2010} it is stated that for any $n\times n$ doubly-stochastic matrix $P$, $\lim_{k\to\infty} P^k = J_n$. However, the identity matrix serves as a direct counter-example to this statement. Examining the argument used in the paper, the Perron--Frobenius theorem is used, so it may be safely assumed that one needs to make the additional assumption that $D$ is irreducible. Nonetheless, even then it is not that simple. Indeed, in \cite[Theorem 5.1]{Renier2009} it is stated that if $A\in\ds$ is irreducible, and not a permutation matrix, then $\lim_{k\to\infty} A^k = J_n$. Upon closer inspection, one may note that this result also appears to require refinement. For example, taking $A=\left[\begin{smallmatrix} 0&J_2\\J_2&0 \end{smallmatrix}\right]$, we encounter a counterexample where $A$ is irreducible but exhibits periodic behavior.  Specifically, it is easy to show that $A^k=A$ for every odd $k>0$ and that $A^k=\left[\begin{smallmatrix} J_2 & 0 \\0 & J_2 \end{smallmatrix}\right]$ for every even $k>0$. Given these subtleties, we also aim to clarify and provide more insight on the sequence $(D^m)_{m\geq 1}$, where $D\in\ds$, as we characterize the limiting behavior of this sequence as $m\to\infty$.

We begin our study in \Cref{sec - prelim} by establishing some preliminary facts and results about doubly stochastic matrices. Then, in \Cref{sec - mult}, we focus on the following question: when does the infinite product $D_1 D_2 D_3 \cdots$ converge to the uniform matrix $J_n$? Therein, we provide a new sufficient condition which improves a result of Schwarz \cite{Schwarz1980}. We then proceed in \Cref{sec - simple} to characterize the behavior of any homogeneous doubly stochastic Markov chain; that is we characterize the behavior of the sequence $(D^m)_{m\geq 1}$ as $m\to\infty$ for an arbitrary doubly stochastic matrix $D$. In particular, we show that $D^m$ converges when $m\to\infty$ if and only if $D$ is permutation-similar to a direct sum of primitive doubly stochastic matrices and we characterize the doubly stochastic matrices which describe a cycle of order $p$.

Let us now introduce some notations that shall be used throughout the text. Given any pair of matrices $A$ and $B$ of size $m \times n$ and $p \times q$ respectively, the \emph{direct sum} of $A$ and $B$ is the block matrix of size $(m+p) \times (n+q)$ defined by:
$$
A \oplus B :=  \begin{bmatrix} A & 0 \\ 0 & B\end{bmatrix}.
$$
Likewise, the \emph{Kronecker product} of $A$ and $B$ is defined as the $pm \times qn$ block matrix 
  $$A \otimes B \,=
    \begin{bmatrix}
    a_{11}B\!\! & \cdots & \!\!a_{1n}B \\[0pt]
    \vdots\!\! & \ddots & \!\!\vdots \\[0pt]
    a_{m1}B\!\! & \cdots & \!\!a_{mn}B
    \end{bmatrix}.
$$

The family of all complex $n \times n$ matrices is denoted by $M_n$. The spectral radius of a matrix $A$ is $\rho(A) := \max |\lambda|$, where the maximum is taken over all eigenvalues of $A$. We denote the number of eigenvalues of modulus $\rho(A)$, multiplicity counted, by $h(A)$, and if there is no confusion, simply by $h$.

%%%%%%%%%%%%%%%%%%%%%%%%%%%%%%%%%%%%%%%%%%%%%%%%%%%%%%%
%%%%%%%%%%%%%%%%%%%%%%%%%%%%%%%%%%%%%%%%%%%%%%%%%%%%%%%
%%%%%%%%%%%%%%%%%%%%%%%%%%%%%%%%%%%%%%%%%%%%%%%%%%%%%%%
%%%%%%%%%%%%%%%%%%%%%%%%%%%%%%%%%%%%%%%%%%%%%%%%%%%%%%%

\section{Preliminary}\label{sec - prelim}

A square matrix $A$ is \emph{reducible} if it can be transformed to a block lower-triangular form by simultaneous row and column permutations, i.e., if there exists a permutation matrix $P$ such that
$$P^{\intercal} A P \,=\, \begin{bmatrix}
A_1 & 0 \\
A_3 & A_2
\end{bmatrix},$$
where $A_1$ and $A_2$ are square matrices. A square matrix which is not reducible is said to be \emph{irreducible}. One of the most important results on irreducible nonnegative matrices is the celebrated Perron--Frobenius theorem \cite{Frobenius1912,Perron1907}.  The theorem comprises numerous statements (for instance, it says that the principal eigenvector is unique up to scaling and has all entries of the same sign). We present here only those relevant to the present discussion. For a comprehensive treatment of the Perron--Frobenius theorem, see \cite[Chapter 8]{HornJohnson2013}.

\begin{Perron}\label{thm - Perron-Frobenius}
Let $A \in M_n$, $n \geq 2$, be irreducible with nonnegative entries. Then the following hold.
\begin{enumerate}[(i)]
\item\label{thm - Perron-Frobenius_1} There exists a  number $\mu > 0$, called the Perron--Frobenius eigenvalue of $A$, such that $\mu$ is a simple eigenvalue of $A$ and any other eigenvalue $\lambda$ of $A$ verifies $|\lambda| \leq \mu$. Thus $\rho(A)=\mu$.
\item\label{thm - Perron-Frobenius_2} The Perron–Frobenius eigenvalue of $A$ fulfils the row-sum inequalities
\begin{equation}\label{eq - frob0}
\min_{1\leq i \leq n} \sum_{j=1}^{n} a_{ij} \leq \mu \leq  \max_{1\leq i \leq n} \sum_{j=1}^{n} a_{ij}.
\end{equation}
\item\label{thm - Perron-Frobenius_3} The matrix $A$ is similar to $e^{2\pi i /h} A$ and the spectrum of $A$ is invariant under multiplication by $e^{2\pi i /h}$.
\item If $h>1$, there exists a permutation matrix $P$ such that
\begin{equation}\label{eq - frob}
            P^{\intercal}AP =
            \begin{bmatrix}
        0&\!\!\!\smash{A_{{1,2}}}&\!\!\!0&\!\!\!\cdots&\!\!0 \\
        0&\!\!\!0&\!\!\!\smash{A_{{2,3}}}&\!\!\!\cdots&\!\!0 \\[-4pt]
        \vdots&\!\!\!\vdots&\!\!\!\vdots&\!\!\!\ddots&\!\!\vdots \\
        0&\!\!\!0&\!\!\!0&\!\!\!\cdots&\!\!\smash{A_{{h-1,h}}} \\
        \smash{A_{{h,1}}}&\!\!\!0&\!\!\!0&\!\!\!\cdots&\!\!0
\end{bmatrix},
\end{equation}
where the non-zero blocks are square matrices.
\item If $h=1$, then the limit $$\lim_{m \to \infty} \frac{A^m}{\mu^m}$$ exists. This limit is called the \emph{Perron projection} of $A$.
\end{enumerate}
\end{Perron}

In the case of doubly stochastic matrices, Perfect and Mirsky showed that reducible doubly stochastic matrices assume a very particular form as a direct sum of smaller irreducible doubly stochastic matrices \cite{Mirsky1965}. To be more precise, if $D \in \ds$ is reducible, then there exist an $n\times n$ permutation matrix $P$ and an integer $r \geq 2$ such that
\begin{equation}\label{thm - irr}
    P^{\intercal} D P = D_1 \oplus D_2 \oplus \cdots \oplus D_r,
\end{equation}
where the $D_j$s are irreducible doubly stochastic matrices. Hence, upon applying the Perron--Frobenius Theorem to each summand, a fundamental result emerges: the eigenvalues of any doubly stochastic matrix $D$ lie within the closed unit disk $\overline{\mathbb{D}}$ and $\rho(D)=1$. In fact, for irreducible doubly stochastic matrices, the parts \eqref{thm - Perron-Frobenius_1} and \eqref{thm - Perron-Frobenius_2} of the Perron--Frobenius Theorem guarantee that $\rho(D)=1$. For reducible matrices, it follows from \eqref{thm - irr} that
\[
\rho(D) = \rho(P^{\intercal}D P) = \rho(D_1 \oplus D_2 \oplus \cdots \oplus D_r) = \max_{1\leq k \leq r} \rho(D_k) = 1.
\]
Therefore, for a doubly stochastic matrices $D$, the quantity $h$ represents the number of unimodular eigenvalues of $D$.

When $D \in \Omega_n$ is irreducible, \eqref{eq - frob} reveals that $D$ is permutation-similar to a matrix with a special form, reminiscent of the $h\times h$ \emph{circular shift matrix}
\[
K_{h} := \begin{bmatrix}
    0&1&0&\!\!\cdots\!\!&0 \\
    0&0&1&\!\!\cdots\!\!&0 \\[-4pt]
    \vdots&\vdots&\vdots&\!\!\ddots\!\!&\vdots \\
    0&0&0&\!\!\cdots\!\!&1 \\
    1&0&0&\!\!\cdots\!\!&0
\end{bmatrix},
\]
with square matrices instead of 1s. In the particular case where $D$ is not only irreducible, but also doubly stochastic, then Marcus, Minc, and Moyls showed that $h$ must divide $n$ and that each nonzero square matrix has the same size (namely $n':=n/h$), which is not necessarily the case when $D$ is not doubly stochastic \cite{Marcus1961}. More specifically, the authors proved that there exists a permutation matrix $P$ such that
\begin{equation}\label{eq - Perron-Frob form}
    P^{\intercal}DP = \begin{bmatrix}
        0&\!\!\!\smash{D_1}&\!\!\!0&\!\!\!\cdots&\!\!0 \\
        0&\!\!\!0&\!\!\!\smash{D_2}&\!\!\!\cdots&\!\!0 \\[-4pt]
        \vdots&\!\!\!\vdots&\!\!\!\vdots&\!\!\!\ddots&\!\!\vdots \\
        0&\!\!\!0&\!\!\!0&\!\!\!\cdots&\!\!\smash{D_{h-1}} \\
        \smash{D_{h}}&\!\!\!0&\!\!\!0&\!\!\!\cdots&\!\!0
    \end{bmatrix},
\end{equation}
where $D_j \in \Omega_{n'}$, $1\leq j \leq h$, and $n'=n/h$. Note that we can also write
\begin{equation}\label{eq - Perron-Frob form-2}
P^{\intercal}DP
= \left( D_1 \oplus \cdots \oplus D_h \right) \left( K_{h} \otimes I_{n'} \right),
\end{equation}
where $K_{h} \otimes I_{n'}$ is the Kronecker product of $K_{h}$ and $I_{n'}$, and the ordering of factors and summands on the right hand side is important.

If $e$ denotes the uniform vector $(1, 1, \dots, 1)^{\intercal}$ and $D$ is any doubly stochastic matrix, then $De = e$. This is a direct manifestation of the fact that 1 is always an eigenvalue of any doubly stochastic matrix. Furthermore, if $D$ is irreducible, then $1$ is the Perron--Frobenius eigenvalue of $D$, and statement \eqref{thm - Perron-Frobenius_1} of the Perron--Frobenius theorem ensures that this eigenvalue is simple. Notably, this fact serves as a characterizing property of irreducibility in doubly stochastic matrices. To see this, suppose that $D\in \ds$ is reducible. In this case, \eqref{thm - irr} guarantees the existence of a permutation matrix $P$ such that $P^{\intercal}DP = D_1 \oplus \cdots \oplus D_r$, where $r \geq 2$ and the $D_j$s are doubly stochastic matrices. Given that the eigenvalues of $D$ encompass all the eigenvalues of $D_1, D_2, \dots, D_r$, and as each of the $D_j$s possesses $1$ as an eigenvalue, it follows that the geometric multiplicity of $1$ as an eigenvalue of $D$ is strictly greater than 1. It thus follows that
\begin{align}\label{thm - car_irr}
D\in\ds \text{ is irreducible} ~\iff~\, \text{1 is a simple eigenvalue of } D.
\end{align}
Note that for an irreducible matrix $D$, it is possible that $h(D)>1$. For example, for the irreducible matrix
\[
D :=
\begin{bmatrix}
0 & 1 \\
1 & 0
\end{bmatrix},
\]
we have $h(D)=2$.

A matrix $A \in M_n$ is said to be \emph{primitive} if it is nonnegative, irreducible, and has a \textit{unique} (simple) eigenvalue of maximal modulus (i.e., $h(A)=1$). In the case of doubly stochastic matrices, this definition can be simplified. Indeed, if a doubly stochastic matrix $D$ is not primitive, then either it is reducible (in which case the geometric multiplicity of $1$ as an eigenvalue is strictly greater than 1), or else $h(D)>1$. In both cases, we end up with $h(D)>1$. Therefore,
\begin{align}\label{thm - car_irr - prim}
D\in\ds \text{ is primitive} ~\iff~\, h(D)=1.
\end{align}

Lastly, we need the following lemma, partially due to Marcus who established the equivalence of \emph{(i)} and \emph{(iv)} in \cite[Lemma 2]{Marcus1957}.

\begin{lem}\label{lem - eigenvalues_0_carac}
Let $D\in\ds$. Then the following are equivalent.
    \begin{enumerate}[(i)]
        \item $D=J_n$.
        \item $0$ is an eigenvalue of geometric multiplicity $n-1$ for $D$.
        \item $0$ is a singular value of geometric multiplicity $n-1$ for $D$.
        \item $\operatorname{rank}(D)=1$.
    \end{enumerate}
\end{lem}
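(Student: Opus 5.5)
The plan is to prove the cycle of implications (i)$\Rightarrow$(ii)$\Rightarrow$(iv)$\Rightarrow$(i), and to connect (iii) to (iv) separately.

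\emph{From (i) to (ii).} The matrix $J_n=\frac1n ee^{\intercal}$ has rank one. Its kernel is therefore the hyperplane $e^{\perp}$, which has dimension $n-1$. So $0$ is an eigenvalue of geometric multiplicity $n-1$.

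\emph{From (ii) to (iv).} The geometric multiplicity of $0$ equals $\dim\ker D=n-1$, so rank--nullity gives $\operatorname{rank}(D)=1$.

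\emph{From (iv) to (ii).} This direction holds for the same reason, read backwards.

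\emph{From (iii) to (iv) and back.} I interpret ``$0$ is a singular value of multiplicity $n-1$'' as saying that $0$ is an eigenvalue of $D^{\intercal}D$ with multiplicity $n-1$; since $D^{\intercal}D$ is symmetric, algebraic and geometric multiplicities agree. We have $\ker(D^{\intercal}D)=\ker D$, because $D^{\intercal}Dx=0$ implies $\|Dx\|^2=x^{\intercal}D^{\intercal}Dx=0$. Hence $\operatorname{rank}(D^{\intercal}D)=\operatorname{rank}(D)$, and (iii) holds exactly when $\operatorname{rank}(D)=1$.

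\emph{From (iv) to (i).} This is the only step that uses double stochasticity, and it is Marcus's lemma. I would redo it directly. Since $D$ has rank one, write $D=uv^{\intercal}$ with $u,v\in\mathbb{R}^n$ nonzero. We may take $u,v$ nonnegative: $D$ is nonnegative and nonzero, so some column $u$ is a nonzero nonnegative vector, and every other column is a multiple of $u$ by a coefficient that must be nonnegative. The condition $De=e$ gives $u\,(v^{\intercal}e)=e$, so $u$ is a constant vector, $u=c\,e$ with $c>0$. The condition $e^{\intercal}D=e^{\intercal}$ gives $(e^{\intercal}u)\,v^{\intercal}=e^{\intercal}$, so $v$ is also constant, $v=d\,e$. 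Then $D=cd\,ee^{\intercal}$. Each row sum of this matrix is $ncd$, and it must equal $1$, so $cd=1/n$ and $D=J_n$.

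\emph{Main obstacle.} Nothing here is genuinely hard. The only points needing care are the justification that a rank-one nonnegative matrix admits a factorization $uv^{\intercal}$ with nonnegative factors, which is really only a sign issue and could be avoided altogether, and fixing the meaning of the multiplicity of a singular value in (iii). If (iii) is meant in terms of the singular value decomposition, then the count of zero singular values is $n-\operatorname{rank}(D)$, and (iii)$\Leftrightarrow$(iv) is immediate.
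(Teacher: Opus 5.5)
Your proposal is correct and is essentially the paper's argument. The one step that uses double stochasticity, (iv)$\Rightarrow$(i), rests on the same idea as the paper: rank one together with equal row sums forces identical rows, and the column sums then force every entry to be $1/n$. You phrase this through $D=uv^{\intercal}$, and, as you note, nonnegativity of $u,v$ is not needed there, since $De=e$ and $e^{\intercal}D=e^{\intercal}$ already make $u$ and $v$ multiples of $e$.

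The other links come from rank--nullity and $\ker(D^{\intercal}D)=\ker D$. This is slightly cleaner than the paper's explicit Fourier eigenvectors of $J_n$ and its appeal to the eigenvalue $1$ of the doubly stochastic matrix $D^{\intercal}D$, and it shows (ii)$\Leftrightarrow$(iii)$\Leftrightarrow$(iv) for arbitrary matrices.
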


\begin{proof}
\noindent $(i) \Longrightarrow (ii)$: It suffices to note that $0$ is an eigenvalue of $J_n$ associated to the $n-1$ linearly independent eigenvectors
\[
\vec{v} := \big(1, \omega_n^k, \omega_n^{2k},\dots,\omega_n^{(n-1)k}\big)^{\intercal}, \qquad 1\leq k \leq n-1,
\]
where $\omega_n:=e^{-2\pi i/n}$. The other eigenvalue is $1$, which necessarily must have multiplicity one.

\smallskip
\noindent $(ii) \Longrightarrow (iii)$: Suppose that $0$ is an eigenvalue of geometric multiplicity $n-1$ for $D$ and observe that if $Dv=0$ then $D^{\intercal}Dv=0$. Thus $0$ is a singular value of $D$ associated with the vector $v$. It follows that $D$ has $0$ as a singular value of geometric multiplicity $n-1$. But, since $D^{\intercal}D$ is a doubly stochastic matrix, and since $1$ is always an eigenvalue of any doubly stochastic matrices, we find that $D$ has exactly one non-zero singular value.

\smallskip
\noindent $(iii) \Longrightarrow (iv)$: From the assumption, it immediately follows that $\operatorname{rank}(D)=1$.

\smallskip
\noindent $(iv) \Longrightarrow (i)$:     The rows of $D$ span a subspace of dimension one. Hence, they must be equal since if two rows were distinct, the sum of the elements of these two rows would be different, which is impossible since $D$ is (doubly) stochastic. Hence,
\[
D = \begin{bmatrix} d_1 & d_2 & \!\cdots\! & d_n\\[-3pt] \vdots & \vdots & \ddots & \vdots\\ d_1& d_2 & \!\cdots\! & d_n \end{bmatrix}.
\]
Since the elements of each column of $D$ sums to $1$, we also find that $nd_j=1$ for each $1\leq j \leq n$. Therefore, $d_j = 1/n$ for all $1\leq j \leq n$ and it follows that $D=J_n$.
\end{proof}

%%%%%%%%%%%%%%%%%%%%%%%%%%%%%%%%%%%%%%%%%%%%%%%%%%%%%%%
%%%%%%%%%%%%%%%%%%%%%%%%%%%%%%%%%%%%%%%%%%%%%%%%%%%%%%%
%%%%%%%%%%%%%%%%%%%%%%%%%%%%%%%%%%%%%%%%%%%%%%%%%%%%%%%
%%%%%%%%%%%%%%%%%%%%%%%%%%%%%%%%%%%%%%%%%%%%%%%%%%%%%%%

\section{Non-homogeneous Markov chains}\label{sec - mult}

In this section, we study the case of non-homogeneous doubly stochastic Markov chains. If $(D_\ell)_{\ell\geq 1}$ is a given sequence of $n\times n$ doubly stochastic matrices, we seek to determine sufficient conditions to ensure that
\begin{equation}\label{E:lim-ddd}
\lim_{m\to\infty}D_1D_2\cdots D_m=J_n.
\end{equation}
This question was initially studied by Maksimov \cite{Maksimov1965,Maksimov1970} and Schwarz \cite{Schwarz1980}. In particular, the latter author showed that if
\begin{equation}\label{E:cond-schwarz}
\sum_{\ell=1}^{\infty} v(D_\ell) = +\infty,
\end{equation}
where
\begin{equation}\label{E:cond-schwarz-2}
v(D):= \min_{i,j} d_{ij},
\end{equation}
then \eqref{E:lim-ddd} holds. In what follows, we present a more general result to ensure \eqref{E:lim-ddd}.

If we label the eigenvalues of $D\in \ds$ as $1=\lambda_1,\lambda_2,\dots,\lambda_n \in \mathbb{C}$, the eigenvalues $\lambda_i$ for $i\geq 2$ are referred to as the \emph{non-stochastic} eigenvalues of $D$. In precise terms, these are the eigenvalues of $D$ whose associated eigenvectors are not scalar multiples of the uniform vector $e$.

\begin{lem}\label{lem - A-J_v.p.}
    Let $D\in \ds$ with eigenvalues $1=\lambda_1,\lambda_2,\dots,\lambda_n \in \mathbb{C}$, and let $\alpha\in\mathbb{C}$ be a complex number. Then the eigenvalues of $D-\alpha J_n$ are $1-\alpha, \lambda_2, \dots, \lambda_n$.
\end{lem}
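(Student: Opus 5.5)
The plan is to exploit the fact that $J_n$ is the orthogonal projection onto the span of the uniform vector $e$, and that it commutes with every doubly stochastic matrix. Indeed, writing $J_n = \frac{1}{n} e e^{\intercal}$, we have $De = e$ and $e^{\intercal} D = e^{\intercal}$ for $D\in\ds$ (row and column sums equal to $1$). Hence
\[
D J_n = \tfrac{1}{n} (De) e^{\intercal} = J_n, \qquad J_n D = \tfrac{1}{n} e (e^{\intercal} D) = J_n .
\]
So $D$ and $J_n$ commute, and moreover $J_n$ acts as $D$ does on $e$.

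Next, we take a Schur triangularization adapted to $e$. Let $u = e/\sqrt{n}$ and complete it to a unitary matrix $U = [\,u \;|\; W\,]$. Since $Du = u$ and $u^{*} D = u^{*}$, the conjugate $U^{*} D U$ has the block form
\[
U^{*} D U = \begin{bmatrix} 1 & 0 \\ 0 & W^{*} D W \end{bmatrix}.
\]
The off-diagonal blocks are $u^{*}DW = u^{*}W = 0$ and $W^{*}Du = W^{*}u = 0$. Therefore the eigenvalues of the $(n-1)\times(n-1)$ block $W^{*}DW$ are precisely $\lambda_2,\dots,\lambda_n$, counted with algebraic multiplicity. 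This follows by comparing characteristic polynomials, which factor as $(t-1)\det(tI - W^{*}DW)$.

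For $J_n$ we have $J_n = u u^{*}$, so $U^{*} J_n U = \operatorname{diag}(1,0,\dots,0)$. Consequently
\[
U^{*}(D-\alpha J_n)U = \begin{bmatrix} 1-\alpha & 0 \\ 0 & W^{*} D W \end{bmatrix},
\]
and the eigenvalues of $D-\alpha J_n$, with multiplicity, are $1-\alpha,\lambda_2,\dots,\lambda_n$.

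There is no real obstacle here. The only delicate point is making sure the labeling "$\lambda_1 = 1$" is consistent with the multiplicities when $1$ has multiplicity greater than one, as happens for reducible $D$. The block decomposition handles this automatically: exactly one copy of the eigenvalue $1$ is split off along $e$, and any remaining copies stay among $\lambda_2,\dots,\lambda_n$ as eigenvalues of $W^{*}DW$. This is why I prefer comparing characteristic polynomials to arguing with eigenvectors. An eigenvector argument (noting that eigenvectors for $\lambda_i\neq 1$ are orthogonal to $e^{\intercal}$ from the left, hence killed by $J_n$) would fail to account for multiplicities and non-diagonalizable $D$.
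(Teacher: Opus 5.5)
Your proof is correct, but it is organized differently from the paper's. The paper uses the commutation $DJ_n=J_nD$ to invoke simultaneous unitary triangularization of a commuting family (Horn--Johnson, Theorem 2.3.3), with a unitary $U$ whose first column is $e/\sqrt{n}$. It then subtracts two triangular matrices with diagonals $(1,\lambda_2,\dots,\lambda_n)$ and $(1,0,\dots,0)$.

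You never actually use the commutation you establish at the start. What drives your argument is that $e/\sqrt{n}$ is simultaneously a right and a left eigenvector of $D$. Because of this, \emph{any} orthonormal completion $W$ gives the block-diagonal form $1\oplus W^{*}DW$, while $J_n=uu^{*}$ becomes $1\oplus 0_{n-1}$. You then identify the spectrum of $W^{*}DW$ with $\lambda_2,\dots,\lambda_n$ by factoring the characteristic polynomial.

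Your route is more elementary and self-contained. It needs no cited theorem, and no check that the simultaneously triangularizing unitary can be chosen with a prescribed first column. It also shows that the entries below the diagonal in the first column of $U^{*}DU$ are in fact zero; the paper's display leaves them as stars.

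The paper's route produces full triangular forms, so the eigenvalues and their algebraic multiplicities are read directly off the diagonal, without a determinant computation. Both arguments correctly handle the case where $1$ is a repeated eigenvalue.
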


\begin{proof}
To establish the claim, first observe that
	\begin{enumerate}[(i)]
		\item the matrices $D$ and $J_n$ commute;
		\item the eigenvalue $\lambda_1=1$ of $D$ is associated with the all-ones eigenvector $e$;
		\item and the eigenvalues of $J_n$ are $1$ with geometric multiplicity one (associated with the eigenvector $e$), and 0 with geometric multiplicity $n-1$.
	\end{enumerate}
	It follows from Theorem 2.3.3 in \cite{HornJohnson2013} that there is a unitary matrix $U$ whose first column is equal to $\tfrac{1}{\sqrt{n}} e$ such that
	\[
	U^* D U = \begin{bmatrix}
		1 & 0 &\!\!\cdots\!\! & 0 \\
		* & \lambda_2 & \!\!\cdots\!\! & 0 \\[-3pt]
		\vdots & \vdots & \!\!\ddots \!\!&\vdots \\
		* & * & \!\!\cdots\!\! & \lambda_n
	\end{bmatrix}
	~\quad\& ~\quad
	U^* J_n U  = \begin{bmatrix}
		1 & 0 &\!\!\cdots\!\! & 0 \\
		* & 0 & \!\!\cdots\!\! & 0 \\[-3pt]
		\vdots & \vdots & \!\!\ddots\!\! &\vdots \\
		* & * & \!\!\cdots\!\! & 0
	\end{bmatrix}.
	\]
	Thus,
	\[
	U^*(D-\alpha J_n) U \,=\, \begin{bmatrix}
		1 & 0 &\!\!\cdots\!\! & 0 \\
		* & \!\lambda_2\! & \!\!\cdots\!\! & 0 \\[-3pt]
		\vdots & \vdots & \!\!\ddots\!\! &\vdots \\
		* & * & \!\!\cdots\!\! & \!\lambda_n\!
	\end{bmatrix} - \begin{bmatrix}
		\alpha & 0 &\!\!\cdots\!\! & 0 \\
		* & 0 & \!\!\cdots\!\! & 0 \\[-3pt]
		\vdots & \vdots & \!\!\ddots\!\! &\vdots \\
		* & * & \!\!\cdots\!\! & 0
	\end{bmatrix} \,=\, \begin{bmatrix}
		1-\alpha\!\! & 0 &\!\!\cdots\!\! & 0 \\
		~* & \!\!\lambda_2\! & \!\!\cdots\!\! & 0 \\[-3pt]
		~\vdots & \vdots & \!\!\ddots\!\! &\vdots \\
		~* & * & \!\!\cdots\!\! & \!\lambda_n \!
	\end{bmatrix}.
	\]
	The conclusion follows from this decomposition.
\end{proof}

We now introduce the main result and subsequently explore its scope and how it relates to previously established findings.

\begin{thm}\label{thm - convergence_general}
Let $(D_\ell)_{\ell\geq 1}$ be a sequence of doubly stochastic matrices in $\Omega_n$. If
\[
\sum_{\ell=1}^\infty \big(1-\sigma_2(D_\ell)\big) = +\infty,
\]
where $\sigma_2(D_\ell)$ denotes the second largest singular value of $D_\ell$, then
\[
\lim_{m\to\infty} D_1D_2\cdots D_m = J_n.
\]
\end{thm}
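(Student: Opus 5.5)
The plan is to pass to the differences $D_\ell - J_n$ and control their spectral norms. Write $\|\cdot\|$ for the operator $2$-norm. The key identity is
\[
D_1 D_2 \cdots D_m - J_n = (D_1 - J_n)(D_2 - J_n)\cdots(D_m - J_n).
\]
It follows by induction from two facts. First, $D J_n = J_n D = J_n$ for every $D\in\ds$, since row and column sums are $1$. Second, $J_n^2=J_n$. Together these give $(A-J_n)(B-J_n) = AB - J_n$ for $A,B\in\ds$, and the product $D_1\cdots D_{m-1}$ is again doubly stochastic, so the induction goes through.

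By submultiplicativity,
\[
\|D_1\cdots D_m - J_n\| \le \prod_{\ell=1}^m \|D_\ell - J_n\|.
\]
The heart of the argument is then the claim that $\|D - J_n\| = \sigma_2(D)$ for every $D\in\ds$. To prove it, compute
\[
(D-J_n)^{\intercal}(D-J_n) = D^{\intercal}D - J_n,
\]
using $D^{\intercal}J_n = J_nD = J_n$ and $J_n^{\intercal}=J_n$. The matrix $D^{\intercal}D$ is symmetric and doubly stochastic, so $e$ is an eigenvector for the eigenvalue $1$. Diagonalize $D^{\intercal}D$ orthogonally with $e/\sqrt n$ as the first basis vector; the eigenvalues are $1=\sigma_1(D)^2 \ge \sigma_2(D)^2\ge\cdots\ge\sigma_n(D)^2$. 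The eigenvalue $1$ is the largest because $\|D\|\le 1$, which holds since $D$ is a convex combination of permutation matrices by Birkhoff's theorem.

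Subtracting $J_n$ only replaces the eigenvalue $1$ on $e$ by $0$, which is exactly Lemma \ref{lem - A-J_v.p.} applied with $\alpha=1$ to $D^{\intercal}D$, here in the symmetric setting. The eigenvalues of $D^{\intercal}D - J_n$ are therefore $0,\sigma_2^2,\dots,\sigma_n^2$, so $\|D-J_n\|^2=\sigma_2(D)^2$. The one point requiring care is the ordering: because $e$ carries the singular value $1$, the remaining singular values on $e^\perp$ are exactly $\sigma_2,\dots,\sigma_n$. This holds even when $\sigma_2=1$, since the multiset is simply what remains after removing one copy of $1$.

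Combining, $\|D_1\cdots D_m - J_n\|\le \prod_{\ell=1}^m \sigma_2(D_\ell)$ with each $\sigma_2(D_\ell)\in[0,1]$. Using $x\le e^{-(1-x)}$ gives
\[
\prod_{\ell=1}^m \sigma_2(D_\ell)\le \exp\Big(-\sum_{\ell=1}^m (1-\sigma_2(D_\ell))\Big)\to 0
\]
by the hypothesis. Hence $D_1\cdots D_m\to J_n$ in norm, and therefore entrywise. The only nontrivial step is the identification $\|D-J_n\|=\sigma_2(D)$; everything else is algebraic bookkeeping.
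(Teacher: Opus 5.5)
Your proposal is correct and follows essentially the same route as the paper. It uses the factorization $D_1\cdots D_m-J_n=(D_1-J_n)\cdots(D_m-J_n)$, submultiplicativity, the identification $\|D-J_n\|_2=\sigma_2(D)$ via $(D-J_n)^{\intercal}(D-J_n)=D^{\intercal}D-J_n$, and the bound $x\le e^{x-1}$. The only cosmetic difference is that you get $\sigma_2(D_\ell)\le 1$ from $\sigma_2\le\sigma_1=\|D\|_2\le 1$, whereas the paper proves $\|D-J_n\|_2\le\sum_i\alpha_i\|P_i-J_n\|_2=1$ separately via Birkhoff.
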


\begin{proof}
The peculiar identity
\[
D_1 D_2\cdots D_m-J_n = (D_1- J_n)(D_2- J_n)\cdots (D_m- J_n)
\]
is proved by induction. By the submultiplicative property of the spectral norm $\|\cdot\|_2$, we have
\begin{equation}\label{E:submul-1}
\|D_1 \cdots D_m-J_n\|_{2} \leq \|D_1- J_n\|_{2}\cdots \|D_m-J_n\|_{2}.  
\end{equation}
Moreover, Birkhoff's theorem and the permutation invariance of both $J$ and the spectral norm ensures that 
\begin{align}
\|D-J_n\|_{2} &=  \bigg\| \sum_{i} \alpha_i (P_i -J_n) \bigg\|_2 \notag\\
&\leq \sum_{i} \alpha_i \|P_i-J_n\|_2 \label{E:submul-2}\\
&= \sum_{i} \alpha_i \|I_n-J_n\|_2 = 1, \notag
\end{align}
for all $D\in\ds$. Here, we used the easily verified identity $\|I_n-J_n\|_2=1$.

Recall that if a sequence $(a_\ell)_{\ell\geq 1}$ of real numbers satisfy $0 \leq a_\ell \leq 1$ for all $\ell$, then $\sum_{\ell=1}^\infty (1-a_\ell) = +\infty$ implies that $\prod_{\ell=1}^\infty a_\ell = 0$. Indeed, since $0\leq x\leq e^{x-1}$ for all $0\leq x\leq 1$, it follows that
\[
0\leq \prod_{k=1}^m a_k \leq \prod_{k=1}^m e^{a_k-1} = e^{-\sum_{k=1}^m (1-a_k)}.
\]
Since $e^{-x}$ is continuous, letting $m\to \infty$ yields $\prod_{k=1}^\infty a_k = 0$. Hence, by \eqref{E:submul-2}, 
\[
\sum_{\ell=1}^\infty \big(1-\|D_\ell-J_n\|_{2}\big) = +\infty
\quad\Longrightarrow\quad \prod_{\ell=1}^\infty \|D_\ell-J_n\|_{2} = 0.
\]
Then, by \eqref{E:submul-1}, the condition on the left-hand side implies
\[
\lim_{m \to \infty} D_1 \cdots D_m = J_n.
\]

To finish, we find yet another interpretation for $\|D_\ell-J_n\|_{2}$. It is well know that $\|D_\ell-J_n\|_{2}$ equals to the greatest singular value of $D_\ell-J_n$. But,
\[
(D_\ell-J_n)^*(D_\ell-J_n) = (D_\ell^{\intercal}-J_n)(D_\ell-J_n) = D_\ell^{\intercal}D_\ell-J_n.
\]
Thus, the largest eigenvalue of $(D_\ell-J)^*(D_\ell-J_n)$ coincides with the largest eigenvalue of $D_\ell^{\intercal}D_\ell-J_n$, which, by \Cref{lem - A-J_v.p.},  is the largest non-stochastic eigenvalue of $D_\ell^{\intercal}D_\ell$, which in turn is the second largest eigenvalue of $D_\ell^{\intercal}D_\ell$. Hence,
\[
\|D_\ell-J_n\|_{2} = \sigma_2(D_\ell). \tag*{\qedhere}
\]
\end{proof}

Let us show that Theorem \ref{thm - convergence_general} implies the Schwarz's result. To establish this fact, we need to make a short digression and perform a delicate calculation. Given a real number $\alpha$,  we have
\[
\|D-\alpha J_n\|_{2} = \sigma_{\max} (D-\alpha J_n) = \rho^{1/2}(D^{\intercal}D-\alpha(2-\alpha)J_n),
\]
since $(D-\alpha J_n)^{\intercal}(D-\alpha J_n) = D^{\intercal}D-\alpha(2-\alpha)J_n$.
\Cref{lem - A-J_v.p.} then ensures that the eigenvalues of the matrix $D^{\intercal}D-\alpha(2-\alpha)J_n$ are $1-\alpha(2-\alpha) = (1-\alpha)^2$, as well as the $n-1$ non-stochastic eigenvalues of $D^{\intercal}D$ (which are the same as the $n-1$ non-stochastic eigenvalues of $D-\alpha J_n$). Consequently,
$$\|D-\alpha J_n\|_{2} = \max\{ |1-\alpha|, \|D-J_n\|_{2} \},$$
from which it also follows that
\begin{equation}\label{E:estim-1}
\|D- J_n\|_{2} = \min_\alpha \|D-\alpha J_n\|_{2} \leq \|D-n v(D) J_n\|_{2},
\end{equation}
where we recall $v(D)$ which was defined in \eqref{E:cond-schwarz-2}.

On the one hand, for $D\neq J_n$, the matrix $\tfrac{1}{1-nv(D)}(D-n v(D) J_n)$ is doubly stochastic, since each row/column sums to 1 and each coefficient is greater than $\tfrac{1}{1-nv(D)}(v(D)-n v(D) \frac{1}{n}) = 0$. This implies that the spectral norm of this latter matrix is 1, since the spectral norm of every doubly stochastic matrix is $1$. Thus
\[
\|D-n v(D) J_n\|_{2} = (1-nv(D)) \left\| \frac{D-n v(D)J_n}{1-nv(D)} \right\|_{2} = 1-nv(D).
\]
As a result, by \eqref{E:estim-1}, we deduce $\|D- J_n\|_{2} \leq 1-nv(D)$. Thus,
\begin{equation}\label{eq - final2}
n v(D) \leq 1 - \|D- J_n\|_{2} = 1-\sigma_2(D)
\end{equation}
for every doubly stochastic matrix $D\neq J_n$.

On the other hand, one can check that \eqref{eq - final2} is trivially verified if $D=J_n$. Hence this relation holds for any $D\in\ds$ and  we conclude that
\[
\sum_{\ell=1}^\infty v(D_\ell) = +\infty
\quad\Longrightarrow\quad
\sum_{\ell=1}^\infty \big(1-\sigma_2(D_\ell)\big) = +\infty.
\]
Consequently, whenever the infinite product of a sequence ${D_1,D_2,\dots} \in \ds$ verifies Schwarz's sufficient condition \eqref{E:cond-schwarz} for convergence, it also verifies the convergence criterion stated in \Cref{thm - convergence_general}.

In order to show that the latter criterion is not simply a reformulation of \eqref{E:cond-schwarz}, but rather a proper generalization, consider the doubly stochastic matrix
\[
D_\ell = \begin{bmatrix}
	1-\varepsilon_\ell-\varepsilon_\ell^2&\varepsilon_\ell&\varepsilon_\ell^2\\
	\varepsilon_\ell&1-2\varepsilon_\ell&\varepsilon_\ell\\
	\varepsilon_\ell^2&\varepsilon_\ell&1-\varepsilon_\ell-\varepsilon_\ell^2
\end{bmatrix},
\]
where $0 \leq \varepsilon_\ell\leq \frac{1}{2}$ goes to 0 as $\ell\to\infty$. Clearly, $v(D_\ell)=\varepsilon_l^2$. Moreover, an easy computation reveals that the eigenvalues of $D$ are $1, 1-3\varepsilon_\ell$ and $1-\varepsilon_\ell-2\varepsilon_\ell^2$. Since the matrix is positive semidefinite, they are also equal to the singular values. Hence, if $\varepsilon_\ell = \frac{1}{4\ell}$, then Schwarz' criteria yield
\[
\sum_{\ell=1}^\infty v(D_\ell) = \frac{1}{16}\sum_{\ell=1}^\infty \frac{1}{\ell^2} < \infty,
\]
while \Cref{thm - convergence_general} gives
\[
\sum_{\ell=1}^\infty (1-\sigma_2(D_\ell)) = \frac{1}{4}\sum_{\ell=1}^\infty \left(\frac{1}{\ell} + \frac{1}{2\ell^2}\right) = \infty.
\]
Hence, $D_1D_2D_3\cdots=J_n$, and the above discussion establishes the more general scope of \Cref{thm - convergence_general}.

However, grouping the matrices together by adjacent pairs and looking at the smallest coefficient of the resulting matrices might allow one to apply Schwarz' criteria, while being easier to verify than \Cref{thm - convergence_general}. We show that this is not possible, even if we allow to group the matrices in blocks of large sizes (i.e., to look at the smallest entry of $D_1D_2\cdots D_k$, then $D_{k+1}D_{k+2}\cdots D_{2k}$, where $k$ is any fixed integer). Indeed, note that the matrices $D_\ell$ can be diagonalized as
\[
D_\ell = \begin{bmatrix}
	1&1&-1\\[3pt]
	1&-2&0\\[3pt]
	1&1&1
\end{bmatrix} \!\!\begin{bmatrix}
	1&0&0\\[3pt]
	0&\alpha_\ell&0\\[3pt]
	0&0&\beta_\ell
\end{bmatrix}\!\!
\begin{bmatrix}
	\frac{1}{3}&\frac{1}{3}&\frac{1}{3}\\[3pt]
	\frac{1}{6}&-\frac{1}{3}&\frac{1}{6}\\[3pt]
	-\frac{1}{2}&0&\frac{1}{2}
\end{bmatrix},
\]
where $\alpha_\ell := 1-3\varepsilon_\ell$ and $\beta_\ell := 1-\varepsilon_\ell-2\varepsilon_\ell^2$. Hence, if $A_{\ell,k}:=\alpha_\ell \cdots \alpha_{\ell+k-1}$ and $B_{\ell,k}:=\beta_\ell \cdots \beta_{\ell+k-1}$, then
\begin{align*}
	D_\ell \cdots D_{\ell+k-1} &= \begin{bmatrix}
		1&1&-1\\[3pt]
		1&-2&0\\[3pt]
		1&1&1
	\end{bmatrix}\!\!\begin{bmatrix}
		1&0&0\\[3pt]
		0&A_{\ell,k}&0\\[3pt]
		0&0&B_{\ell,k}
	\end{bmatrix}\!\!
	\begin{bmatrix}
		\frac{1}{3}&\frac{1}{3}&\frac{1}{3}\\[3pt]
		\frac{1}{6}&-\frac{1}{3}&\frac{1}{6}\\[3pt]
		-\frac{1}{2}&0&\frac{1}{2}
	\end{bmatrix} \\
	&= J_3 + \frac{1}{6}\begin{bmatrix}
		A_{\ell,k}+3B_{\ell,k}&-2A_{\ell,k}&A_{\ell,k}-3B_{\ell,k}\\
		-2A_{\ell,k}&4A_{\ell,k}&-2A_{\ell,k}\\
		A_{\ell,k}-3B_{\ell,k}&-2A_{\ell,k}&A_{\ell,k}+3B_{\ell,k}.
	\end{bmatrix},
\end{align*}
Since $A_{\ell,k},B_{\ell,k}\geq 0$, the smallest entry of $D_\ell \cdots D_{\ell+k-1}$ is realized either by $\frac{1}{3}-\frac{1}{3}A_{\ell,k}$ or by $\frac{1}{3}+\frac{1}{6}A_{\ell,k}-\frac{1}{2}B_{\ell,k}$. But since $\alpha_\ell \leq \beta_\ell$ for all $\ell$, we have $A_{\ell,k}\leq B_{\ell,k}$, which implies that $A_{\ell,k}-3B_{\ell,k} \leq A_{\ell,k}-3A_{\ell,k} = -2A_{\ell,k}$. Hence, the smallest coefficient of $D_\ell \cdots D_{\ell+k}$ is attained by $\frac{1}{3}+\frac{1}{6}A_{\ell,k}-\frac{1}{2}B_{\ell,k}$. It thus follows that
\begin{align*}
	\sum_{n=1}^\infty v(D_{(n-1)k+1} \cdots D_{nk}) = \sum_{n=1}^\infty \left(\frac{1}{3}+\frac{1}{6}A_{(n-1)k+1,k}-\frac{1}{2}B_{(n-1)k+1,k} \right).
\end{align*}
But now, observe that Stirling's approximation yield
\begin{align*}
	A_{(n-1)k+1,k} &= \alpha_{(n-1)k+1} \cdots \alpha_{nk} = (1-3\varepsilon_{(n-1)k+1})\cdots (1-3\varepsilon_{nk}) \\
	&= \left( 1-\frac{3}{4((n-1)k+1)}\right)\cdots\left( 1-\frac{3}{4nk}\right) \\
	&= \frac{(n-1)k+\frac{1}{4}}{(n-1)k+1}\cdots \frac{nk-\frac{3}{4}}{nk} = \frac{\Gamma(nk+\frac{1}{4})\Gamma((n-1)k+1)}{\Gamma(nk+1)\Gamma((n-1)k+\frac{1}{4})} \\
	%
%	&\asymp% 
%	%
%	\frac{(nk-\frac{3}{4})^{\frac{3}{4}}\cdot((n-1)k)^{\frac{1}{2}}}{(nk)^{\frac{3}{2}}((n-1)k-\frac{3}{4})^{-\frac{1}{4}}} \\
	%
	&\asymp 1-\frac{3}{4n}-\frac{3(1-8k)}{32kn^{2}} + O(n^{-3}).
\end{align*}
Similarly, 
\[
B_{(n-1)k+1,k} \asymp 1-\frac{1}{4n}-\frac{3k+1}{32kn^{2}} + O(n^{-3}).
\]
Therefore, for all positive integer values of $k$, we finally get 
\begin{align*}
	\sum_{n=1}^\infty v(D_{(n-1)k+1} &\cdots D_{nk}) = \sum_{n=1}^\infty \left(\frac{1}{3}+\frac{1}{6}A_{(n-1)k+1,k}-\frac{1}{2}B_{(n-1)k+1,k} \right) \\
	&\asymp \sum_{n=1}^\infty \left(\frac{1}{3}+\frac{1}{6}\left(1-\frac{3}{4n}-\frac{3(1-8k)}{32kn^{2}} + O(n^{-3})\right)\right)\\
	&\qquad\qquad\qquad-\frac{1}{2}\left(1-\frac{1}{4n}-\frac{3k+1}{32kn^{2}} + O(n^{-3})\right)  \\
	&= \sum_{n=1}^\infty \left( \frac{11}{64n^{2}} + O(n^{-3}) \right) \\
	&<\infty.
\end{align*}
Hence, there exists matrices for which the condition of \Cref{thm - convergence_general} is stronger  than Schwarz' criteria, but for which the latter doesn't work no matter how many matrices are grouped together.

Moreover, the condition provided in \Cref{thm - convergence_general} is also convenient if the doubly stochastic Markov chain is not predetermined. Indeed, let
\[
D=\begin{bmatrix} \frac{1}{4}&\frac{1}{2}&\frac{1}{4}\\[4pt]
	\frac{1}{2}&0&\frac{1}{2}\\[4pt]
	\frac{1}{4}&\frac{1}{2}&\frac{1}{4}
\end{bmatrix}.
\]

Then it is easily verified that a direct application of Schwarz' criteria does not allow one to conclude that $D^m \to J_3$ as $m\to\infty$, while \Cref{thm - convergence_general} does. However, applying Schwarz' result to the matrix $D^2$ does allow one to show that the infinite product converges to $J_3$ in less computation than \Cref{thm - convergence_general} would require. Now, suppose that the matrix $D_k$ is the matrix $D$ with a probability of $1/7$, and any of the $3\times 3$ permutation matrices with an equal probability of $1/7$.  Although probabilistic considerations have not appeared so far -- and will play only a modest role here -- they will take on a more significant role in later sections. It is nevertheless natural to introduce this simple stochastic variant now. In this random setting, however, there is no straightforward way to group the matrices $D_k$ together to apply Schwarz' result. However, it is clear that $1-\sigma_2(D_k)$ is equal to $0$ with a probability of $6/7$ and $1/2$ with a probability of $1/7$. Hence, an application of \Cref{thm - convergence_general} allows one to conclude that the infinite product $D_1D_2 \cdots D_m$ converges to $J_3$ almost surely as $m\to\infty$.

	During the reviewing process of this document, the papers \cite{MR96306, Hajnal} were brought to our attention. Among the contributions of these publications, \cite[Theorem 3]{Hajnal} was particularly relevant to the present research. Indeed, for a sequence $(P_k)_{k\geq 1}$ of stochastic matrices, define $U_{r,k}=[u_{ij}^{(r,k)}]$ to be the stochastic matrix
	\begin{equation}
		U_{r,k} := P_{r+1} P_{r+2} \cdots P_{r+k}.
	\end{equation}
	The sequence $(P_k)_{k\geq 1}$ is said to be \emph{weakly ergodic} if for all $i,j,s=1,2,\dots,n$ and all $r\geq0$, we have
	\[
	u_{i,s}^{(r,k)} - u_{j,s}^{(r,k)} \xrightarrow[k\to\infty]{} 0,
	\]
	that is, if all rows of $U_{r,k}$ tend to become the same as $k\to\infty$. Moreover, call any scalar function $\mu(\cdot)$ that is continuous on the set of $n\times n$ stochastic matrices $P$ and satisfies $0 \le \mu(P) \le 1$ a \emph{coefficient of ergodicity}. This coefficient is said to be \emph{proper} if $\mu(P)=1$, or, equivalently, if every rows of $P$ are identical.
	
	Hajnal proved in \cite[Theorem 3]{Hajnal} that if $1-m(\cdot)$ and $\mu(\cdot)$ are two proper coefficients of ergodicity satisfying 
	\begin{equation}\label{eq:2.2}
		m\!\left(P_1P_2\cdots P_k\right)
		\le
		\prod_{i=1}^k \bigl(1-\mu(P_i)\bigr),
	\end{equation}
	for any stochastic matrices $P_i$ $(i=1,\ldots,k)$ and any $k\geq 1$, then $(P_k)_{k\geq 1}$ is weakly ergodic if and only if there exists a strictly increasing subsequence $(i_j)_{j\geq 1}$ of the positive integers satisfying
	\[
	\sum_{j=1}^\infty \mu(U_{i_j,i_{j+1}-i_j}) = \infty.
	\]
	Now, let us make the following three observations:
	\begin{enumerate}
		\item $J_n$ is the only doubly stochastic matrix for which every row is identical.
		
		\item[2.] $\mu(D):=1-\|D-J_n\|_2$ is a proper coefficient of ergodicity on doubly stochastic matrices.
		
		\item[3.] If $m(D)=1-\mu(D)=\|D-J_n\|_2$, then \eqref{eq:2.2} is satisfied (see \eqref{E:submul-1}).
	\end{enumerate}
	
	As consequences, we find that (1) a sequence of doubly stochastic $(D_k)_{k\geq 1}$ is weakly ergodic if and only if for all $r\geq 0$, $\lim_{k\to \infty} D_{r+1}\cdots D_{r+k} = J_n$; and (2) a coefficient of ergodicity $\mu(\cdot)$ is proper if and only if $\mu(D)=1 \iff D=J_n$. Moreover, we find by defining $m(D)=1-\mu(D)=\|D-J_n\|_2$ that Hajnal results reduces, after similar observations to those made in the proof of Theorem \ref{thm - convergence_general}, to the equivalence
	\begin{gather}
		\lim_{k\to \infty} D_{r+1} D_{r+2} \cdots D_{r+k} = J_n, \quad \forall r\geq 0, \label{eq - Hajnal2}\\
		\Updownarrow \nonumber\\
		\exists \text{ increasing sequence } (i_j) \text{ s.t. } \sum_{j=1}^\infty \big(1-\sigma_2(D_{i_{j}+1}\cdots  D_{i_{j+1}}) \big)=\infty. \label{eq - Hajnal}
	\end{gather}
	Moreover, it is clear (simply choose $i_j=j$ for all $j\geq 1$) that the sufficient condition of Theorem \ref{thm - convergence_general} implies \eqref{eq - Hajnal}, and that \eqref{eq - Hajnal2} implies that $D_1D_2 \cdots =J_n$. Hence, Theorem \ref{thm - convergence_general} can be derived from Hajnal's 1976 result.

As a final remark to this section, observe that inequality \eqref{eq - final2} is of independent interest, as it gives us an easy-to-compute upper bound on the size of the second largest eigenvalue of a symmetric doubly stochastic matrix. Such an estimate is useful in various context. For instance, Boyd \textit{et al.} \cite{Boyd2006} showed in 2006 that the averaging time of any \textit{gossip} algorithm depends on the second largest eigenvalue of a symmetric doubly stochastic matrix: the smaller this eigenvalue, the faster the averaging algorithm. These algorithms, somehow based on how an epidemic spreads, aim to distribute a certain amount of information to all members of a group in a decentralized manner (i.e., with no master node aggregating information or distributing tasks). Such algorithms have multiple applications in communication \cite{pmlr-v97-koloskova19a}, in cybersecurity \cite{6855039} and in Big Data \cite{Cao2018GossipBasedLB}.

%%%%%%%%%%%%%%%%%%%%%%%%%%%%%%%%%%%%%%%%%%%%%%%%%%%%%%%
%%%%%%%%%%%%%%%%%%%%%%%%%%%%%%%%%%%%%%%%%%%%%%%%%%%%%%%
%%%%%%%%%%%%%%%%%%%%%%%%%%%%%%%%%%%%%%%%%%%%%%%%%%%%%%%
%%%%%%%%%%%%%%%%%%%%%%%%%%%%%%%%%%%%%%%%%%%%%%%%%%%%%%%

\section{Homogeneous Markov chains}\label{sec - simple}

In this section, we consider the behavior of the sequence $D^m$ as $m\to \infty$, where $D$ is a doubly stochastic matrix. As a matter of fact, in all generality, one can use the Jordan canonical form of $A\in \mathbb{C}^{n\times n}$ to compute the powers of $A$ and the limit of $A^m$ as $m\to\infty$. Using this method, Hensel showed that $\lim_{m\to\infty} A^m$ exists if and only if there exist a nonsingular matrix $P$ such that
\[
P^{-1}AP =
\begin{bmatrix}
  I & 0 \\
  0 & K
\end{bmatrix},
\]
where $\rho(K)<1$ \cite{Hensel1926}. In this case,
\[
\lim_{m\to\infty} A^m = P
\begin{bmatrix}
  I & 0 \\
  0 & 0
\end{bmatrix}
P^{-1}
\]
and in particular, $\lim_{m\to\infty} A^m=0$ if and only if $\rho(A)<1$. However, in many cases, this condition is difficult to verify, and the result does not provide significant insights into the behavior of the sequence $(A^m)_{m\geq 1}$ when $\lim_{m\to\infty} A^m$ does not exist. Therefore, using properties of $\ds$ and tools from matrix theory, we aim to provide a more detailed characterization of the behavior of the sequence $(D^m)_{m\geq 1}$ as $m\to \infty$ for any doubly stochastic matrix $D$. In particular, we shall see that one of three phenomena can occur:
\begin{enumerate}[(i)]
\item The sequence forms a cycle, i.e., $D^{p+1}=D$ for some integer $p\geq1$.
\item The sequence converges to a doubly stochastic matrix of the form
\[
P^{\intercal}\left( J_{n_1}\oplus\cdots\oplus J_{n_r}\right) P,
\]
where $P$ is a permutation matrix.
\item The sequence does not converge and is not cyclic. However, the sequence follows a pattern which can be loosely described as \emph{converging to a cycle}.
\end{enumerate}

To describe more precisely the above cases, we say that a matrix $A$ is \emph{cyclic of order $p$} if there is a positive integer $p$ satisfying $A^{p+1}=A$ and $A^{m+1}\neq A$ for all $1\leq m<p$. Moreover, in the language of matrix theory, the matrix $A$ is called \emph{convergent} if $\lim_{m\to\infty} A^m=0$ and \emph{semi-convergent} whenever $\lim_{m\to\infty} A^m$ exists. Here, we are only concerned with semi-convergence since no doubly stochastic matrix is convergent. Furthermore, in our setting, the concepts of proper convergence and convergence (in the sense of Markov chains), defined in the introduction, coincide with the one of semi-convergence. Hence, we shall adopt the classical terminology of semi-convergence mentioned above to avoid any potential confusion.

The work on the convergence problem of doubly stochastic matrices mainly began around 1965. Since then, several results have been proven. However, in recent years, there seems to be misconceptions around some of these results, and there are a handful of incorrect theorems in the literature. We aim to provide a complete characterization of the doubly stochastic matrices $D$ satisfying $\lim_{k\to\infty} D^k = J_n$ in \Cref{thm - converging} and \Cref{thm - converging_general}.

As the first step, the crucial property \eqref{thm - irr} allows us, without any loss of generality,  to assume that the matrix $D\in\ds$ is irreducible. Indeed, for any permutation matrix $P$ and any doubly stochastic matrix $D$, the behavior of the matrix $D^m$ when $m\to \infty$ is entirely determined by that of $\left(P^{\intercal}DP\right)^m=P^{\intercal}D^m P$. Moreover, if $P^{\intercal} D P = D_1 \oplus D_2 \oplus \cdots \oplus D_r,$ then
\[
\left(P^{\intercal} D P\right)^m = D_1^m \oplus D_2^m \oplus \cdots \oplus D_r^m.
\]
Thus, it is enough to study the behavior of $D_j^m$ for the irreducible doubly stochastic matrices $D_j$, $1\leq j \leq r$, to characterize the behavior of the sequence $(D^m)_{m\geq 1}$.
Hence, using the same argument with \eqref{eq - Perron-Frob form}, we observe that it suffices to first prove our results for irreducible doubly stochastic matrices of the form
\begin{align}
    D = \begin{bmatrix}
            0&\!\!\!\smash{D_1}&\!\!\!0&\!\!\!\cdots&\!\!0 \\
            0&\!\!\!0&\!\!\!\smash{D_2}&\!\!\!\cdots&\!\!0 \\[-3pt]
            \vdots&\!\!\!\vdots&\!\!\!\vdots&\!\!\!\ddots&\!\!\vdots \\
            0&\!\!\!0&\!\!\!0&\!\!\!\cdots&\!\!\smash{D_{h-1}} \\
            \smash{D_{h}}&\!\!\!0&\!\!\!0&\!\!\!\cdots&\!\!0
        \end{bmatrix} = \left( \bigoplus_{i=1}^{h} D_{i}  \right) \!\left( K_{h} \bigotimes I_{n/h} \right),
\end{align}
where $h=h(D)$ and $D_{i} \in \Omega_{n/h}$, $1\leq i \leq h$.

This section will thus be divided into three different subsections. We begin by identifying the general behavior of the powers of irreducible doubly stochastic matrices. As a corollary, we are then able to characterize the irreducible doubly stochastic matrices which (i) exhibit a cycle of order $p$, (ii) semi-converge, and (iii) diverge. Moreover, we also determine the precise way in which the sequence $(D^m)_{m\geq 1}$ diverges. Finally, we combine the results of these two subsections to characterize the behavior of $D^m$ for a general doubly stochastic matrix $D$.

%%%%%%%%%%%%%%%%%%%%%%%%%%%%%%%%%%%%%%%%%%%%%%%%%%%%%%%

\subsection{Irreducible doubly stochastic matrices}

The following lemma allows us to easily compute the powers of the matrix $D=\left( D_1 \oplus \cdots \oplus D_h \right) \left( K_{h} \otimes I_{n'} \right)$ encountered in \eqref{eq - Perron-Frob form-2}. For the sake of notational simplicity, let us denote $[m]$ as the number $m\!\!\mod h$. We observe that the $m$th power of the matrix $D$ has the structure of the $m$th power of the circular shift matrix $K_h$, where the nonzero coefficients are in the position $(i,[i+m])$, $1\leq i \leq h$, except that instead of having 1s in these entries, we have more complex matrices.

\begin{lem}\label{lem - useful}
Let 
\[
D = \left( D_1 \oplus \cdots \oplus D_h \right) \left( K_{h} \otimes I_{n'} \right) \in M_{n}(\mathbb{C}), 
\]
where $h,n'\in \mathbb{N}$, $n=hn'$, and $D_j\in M_{n'}(\mathbb{C})$ for every $1\leq j \leq h$. Then
\[
D^{m} = \Bigg( \bigoplus_{i=1}^{h} \prod_{j=1}^{m} D_{[i+j-1]}  \Bigg) \!\left( K_{h}^m \otimes I_{n'} \right).
\]
\end{lem}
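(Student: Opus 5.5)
Write $B := D_1 \oplus \cdots \oplus D_h$ and $S := K_h \otimes I_{n'}$, so that $D = BS$. We argue by induction on $m$, with indices taken mod $h$ in $\{1,\dots,h\}$.

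The key ingredient is a commutation rule for moving the shift past a block-diagonal matrix. We claim that
\[
S\,(A_1\oplus\cdots\oplus A_h) = (A_{[2]}\oplus A_{[3]}\oplus\cdots\oplus A_{[h+1]})\,S
\]
for arbitrary blocks $A_i \in M_{n'}(\mathbb{C})$. To check it, note that $S$ has identity blocks in positions $(i,[i+1])$. Hence block $(i,[i+1])$ of the left side is $A_{[i+1]}$, and block $(i,[i+1])$ of the right side is also $A_{[i+1]}$. All other blocks vanish on both sides. Iterating the claim $k$ times gives
\[
S^k(\oplus_i A_i) = (\oplus_i A_{[i+k]})\,S^k .
\]
We also use $S^k = K_h^k \otimes I_{n'}$, which follows from the mixed-product property of the Kronecker product.

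\textbf{Base case.} For $m=1$ the formula reads $D = (\oplus_i D_{[i]})S$, which is the definition of $D$.

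\textbf{Inductive step.} Suppose the formula holds for $m$, and set $C_i := \prod_{j=1}^m D_{[i+j-1]}$. Then
\[
D^{m+1} = D^m D = (\oplus_i C_i)\,S^m\,(\oplus_i D_{[i]})\,S .
\]
By the commutation rule, $S^m(\oplus_i D_{[i]}) = (\oplus_i D_{[i+m]})S^m$. Substituting,
\[
D^{m+1} = \Big(\oplus_i\, C_i D_{[i+m]}\Big)\,S^{m+1}.
\]
Finally, $C_i D_{[i+m]} = \prod_{j=1}^{m+1} D_{[i+j-1]}$, because the new factor corresponds to $j=m+1$ and is appended on the right, preserving the order of the product. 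This completes the induction.

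The only delicate point is the index bookkeeping in the commutation rule. Whether the shift moves block indices up or down depends on the convention that $K_h$ has its ones at positions $(i,i+1)$. That convention is what produces $[i+k]$ rather than $[i-k]$, and I would verify it on the $h=2$ case as a sanity check.
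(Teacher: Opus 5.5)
Your proof is correct and follows the paper's route: induction on $m$ via $D^{m+1}=D^{m}D$, where the only surviving block products are $C_i D_{[i+m]}$, which are appended on the right. The paper writes out the block matrices and tracks the block positions $(i,[i+m-1])$ explicitly, whereas you package the same bookkeeping into the intertwining identity $S(\oplus_i A_i)=(\oplus_i A_{[i+1]})S$; this is a tidier way of organizing the same computation.
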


\noindent {\em Remark}: Since we are in a non-commutative setting, the ordering in products is important. In the above formula, the product should be interpreted as the left multiplication, i.e., 
\[
\prod_{j=1}^{m} A_i := A_1 A_2 \cdots A_m,
\]
where $A_j$ are arbitrary $n \times n$ matrices.

\begin{proof}
The proof is by induction. For $m=1$, the result is trivial. Now suppose that the statement holds true for $m-1$, and write $B_i=\prod_{j=1}^{m-1} D_{[i+j-1]}$. Then
    \begin{align*}
        D^m &= D^{m-1}D = \Bigg( \bigoplus_{i=1}^{h} \prod_{j=1}^{m-1} D_{[i+j-1]}  \Bigg) \!\left( K_{h}^{m-1} \otimes I_{n'} \right)\!D \\
        &=
        \begin{bmatrix}
            0&\!\!\!\cdots&\!\!\!0&\!\!\!\smash{B_1}\!&\!\!\!\cdots&\!\!0 \\[-3pt]
            0&\!\!\!\cdots&\!\!\!0&\!\!\!0&\!\!\!\ddots&\!\!\!0  \\[-3pt]
            \vdots&\!\!\!\cdots&\!\!\!\vdots&\!\!\!\vdots&\!\!\!\cdots\!&\!\!\!\smash{B_{h-[m-1]}} \\
            \smash{B_{h+1-[m-1]}}\!&\!\!\!\cdots&\!\!\!0&\!\!\!0&\!\!\!\cdots&\!\!0 \\[-3pt]
            0&\!\!\!\ddots&\!\!\!0&\!\!\!0&\!\!\!\cdots&\!\!0 \\
            0&\!\!\!\cdots&\!\!\!\smash{B_{h}}&\!\!\!0&\!\!\!\cdots&\!\!0
        \end{bmatrix}\!
        \begin{bmatrix}
            0&\!\!\!\smash{D_1}&\!\!\!0&\!\!\!\cdots&\!\!0 \\
            0&\!\!\!0&\!\!\!\smash{D_2}&\!\!\!\cdots&\!\!0 \\[-3pt]
            \vdots&\!\!\!\vdots&\!\!\!\vdots&\!\!\!\ddots&\!\!\vdots \\
            0&\!\!\!0&\!\!\!0&\!\!\!\cdots&\!\!\smash{D_{h-1}} \\
            \smash{D_{h}}&\!\!\!0&\!\!\!0&\!\!\!\cdots&\!\!0
        \end{bmatrix},
    \end{align*}
    where each $B_j$ lie in the position $(j,[j+m-1])$, $1\leq j \leq h$. The block multiplication of these matrices is everywhere zero, except when we multiply $B_j$ by $D_{[j+m-1]}$. Hence, we obtain
    \begin{align*}
	D^m &=
	\begin{bmatrix}
		0&\!\!\!\cdots&\!\!\!0&\!\!\!\!\!\!\smash{B_1 D_{[m]}}&\!\!\!\cdots&\!\!0 \\[-3pt]
		\vdots&\!\!\!\ddots&\!\!\!\vdots&\!\!\!\vdots&\!\!\!\ddots&\!\!\!\vdots  \\
		0&\!\!\!\cdots&\!\!\!0&\!\!\!0&\!\!\!\cdots&\!\!\!\smash{B_{h-[m]} D_{[h-1]}} \\[1pt]
		\smash{B_{h+1-[m]}D_{[h]}}&\!\!\!\cdots&\!\!\!0&\!\!\!0&\!\!\!\cdots&\!\!0 \\[-3pt]
		\vdots&\!\!\!\ddots&\!\!\!\vdots&\!\!\!\vdots&\!\!\!\ddots&\!\!\vdots \\
		0&\!\!\!\cdots&\!\!\!\smash{B_{h} D_{[m-1]}}\!\!\!\!&\!\!\!0&\!\!\!\cdots&\!\!0
	\end{bmatrix}\\
	&= \Bigg( \bigoplus_{i=1}^{h} \prod_{j=1}^{m} D_{[i+j-1]}  \Bigg) \!\left( K_{h}^m \otimes I_{n'} \right). \tag*{\qedhere}
\end{align*}
\end{proof}

In \Cref{lem - A-J_v.p.}, we see that for any doubly stochastic matrix $D$ with eigenvalues $1,\lambda_2,\dots,\lambda_n$, where $1$ is the Perron root of $D$ associated with the all-ones eigenvector, the eigenvalues of $D-J_n$ are precisely $0,\lambda_2,\dots,\lambda_n$. Consequently, we have $h(D)=1$ if and only if $\rho(D-J_n)<1$, which itself is equivalent to the fact that $\lim_{m\to\infty} (D-J_n)^m=0$. Moreover, we also saw that $(D-J_n)^m=D^m-J_n$. Therefore, it follows directly that
\begin{equation}\label{eq - conv-J}
\lim_{m\to\infty} D^m = J_n \quad\iff\quad h(D) =1.
\end{equation}

In his 1965 paper, Maksimov provided a more general result by showing that, under certain conditions, there exists a number $p$ such that $\lim_{m\to\infty} D^{mp+r}$ exists, for any $1\leq r \leq p$, and he precisely determined the value of this limit. However, his result relied on a Birkhoff decomposition of $D$ and on subgroups and quotient groups \cite{Birkhoff1946}. This approach introduced challenging conditions to verify, and, moreover, the eventual limit is difficult to compute. The following result provides a clear picture of the period $p$ as well as an explicit formula for the limit. Recall the decomposition \eqref{eq - Perron-Frob form}, which is used below.

\begin{thm}\label{thm - convergence_general_simple}
Let $D\in\ds$ be an irreducible doubly stochastic matrix and let $P$ be a permutation matrix such that
\[
P^{\intercal} D P = \left( \bigoplus_{i=1}^{h} D_{i}  \right) \!\left( K_{h} \otimes I_{n'} \right),
\]
where $h:=h(D)$ and $D_{i} \in \Omega_{n'}$, $1\leq i \leq h$ and $n'=n/h$. Then, for every $1\leq r \leq h$,
\[
D^{mh+r} \,\xrightarrow{m\to\infty}\, P\left(K_{h}^r \otimes J_{n'} \right) P^{\intercal}.
\]
\end{thm}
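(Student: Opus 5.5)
The plan is to work with $A:=P^{\intercal}DP=\big(\bigoplus_{i=1}^h D_i\big)(K_h\otimes I_{n'})$. Since $D^k=PA^kP^{\intercal}$ for every $k$, it suffices to show $A^{mh+r}\to K_h^r\otimes J_{n'}$ as $m\to\infty$.

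First I apply \Cref{lem - useful} with exponent $mh+r$. Because $K_h^h=I_h$, we have $K_h^{mh+r}=K_h^r$. Since indices are taken mod $h$, the $i$th block product splits as
\[
\prod_{j=1}^{mh+r} D_{[i+j-1]} \;=\; C_i^{\,m}\, T_{i,r},
\]
where $C_i:=D_{[i]}D_{[i+1]}\cdots D_{[i+h-1]}$ and $T_{i,r}:=D_{[i]}\cdots D_{[i+r-1]}$. Both are products of matrices in $\Omega_{n'}$, so both lie in $\Omega_{n'}$. The same lemma with $m=h$ gives $A^h=C_1\oplus\cdots\oplus C_h$. If I can show $C_i^m\to J_{n'}$ for every $i$, then, since $J_{n'}T=J_{n'}$ for any doubly stochastic $T$, each block product tends to $J_{n'}$. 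Hence $A^{mh+r}\to (I_h\otimes J_{n'})(K_h^r\otimes I_{n'})=K_h^r\otimes J_{n'}$, and conjugating by $P$ gives the claim.

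By \eqref{eq - conv-J}, the key step is to prove $h(C_i)=1$ for each $i$. I will get this by counting unimodular eigenvalues of $A^h$.

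\begin{itemize}
\item $A$ is irreducible, so by part \eqref{thm - Perron-Frobenius_1} of the Perron--Frobenius theorem the eigenvalue $1$ is simple.
\item By part \eqref{thm - Perron-Frobenius_3}, $A$ is similar to $e^{2\pi i/h}A$. So each $h$th root of unity $\omega^k$, $\omega:=e^{2\pi i/h}$, is an eigenvalue, with the same algebraic multiplicity as $1$, namely one.
\item This already accounts for $h=h(A)$ unimodular eigenvalues, counted with multiplicity. So these simple roots of unity are exactly the unimodular eigenvalues of $A$.
\item The eigenvalues of $A^h$ are the $h$th powers of those of $A$. Hence $A^h$ has eigenvalue $1$ with algebraic multiplicity exactly $h$, and no other unimodular eigenvalue.
\item Each $C_i$ is doubly stochastic, so each contributes at least the eigenvalue $1$. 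Since there are $h$ blocks and only $h$ unimodular eigenvalues in total, each $C_i$ has exactly one unimodular eigenvalue, i.e.\ $h(C_i)=1$.
\end{itemize}

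\eqref{eq - conv-J} then gives $C_i^m\to J_{n'}$.

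I expect the main obstacle to be this multiplicity bookkeeping: making sure $h(A)$ counts multiplicity, and that the similarity in part \eqref{thm - Perron-Frobenius_3} transports algebraic multiplicities. Both hold, because similar matrices have the same characteristic polynomial. The remaining steps are just index tracking in \Cref{lem - useful}, in particular checking that the leftover $r$ factors sit on the right of $C_i^m$, where they are absorbed by $J_{n'}$.
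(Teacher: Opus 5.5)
Your proposal is correct and follows essentially the same route as the paper: conjugate to the block-cyclic form, use \Cref{lem - useful} to write each block of $A^{mh+r}$ as $C_i^{m}T_{i,r}$ (the paper's $D_{i,h}^{m}D_{i,r}$), get $h(C_i)=1$ by counting the $h$ unimodular eigenvalues of $A^h=C_1\oplus\cdots\oplus C_h$, and conclude via \eqref{eq - conv-J} and $J_{n'}T=J_{n'}$. Your extra step identifying the unimodular eigenvalues of $A$ as simple $h$th roots of unity is valid but not needed: $h(A^h)=h(A)$ already follows because the eigenvalues of $A^h$, with algebraic multiplicity, are the $h$th powers of those of $A$.
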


\begin{proof}
According to \Cref{lem - useful}, we have
\[
P^{\intercal}D^{mh+r}P = \bigg( \bigoplus_{i=1}^{h} \prod_{j=1}^{mh+r} D_{[i+j-1]}  \bigg) \!\left( K_{h}^{mh+r} \otimes I_{n'} \right).
\]
Notice that $K_h^{mh+r} = (K_h^h)^m K_h^r = I_h^m K_h^r = K_h^r$. Hence, we have
\begin{align*}
    \bigoplus_{i=1}^{h} \prod_{j=1}^{mh+r} D_{[i+j-1]} \,&=\, \bigoplus_{i=1}^{h} \, \bigg(\prod_{k=0}^{m-1} \prod_{j=kh+1}^{(k+1)h} D_{[i+j-1]} \bigg) \bigg(\prod_{j=mh+1}^{mh+r} D_{[i+j-1]} \bigg)\\
    &=\, \bigoplus_{i=1}^{h} \, \bigg(\prod_{k=0}^{m-1} \prod_{j=1}^{h} D_{[i+j+kh-1]} \bigg) \bigg(\prod_{j=1}^{r}  D_{[i+j+mh-1]} \bigg)\\
    &=\, \bigoplus_{i=1}^{h} \, \bigg(\prod_{k=0}^{m-1} \prod_{j=1}^{h} D_{[i+j-1]} \bigg) \bigg(\prod_{j=1}^{r}  D_{[i+j-1]} \bigg)  \\
    &=\, \bigoplus_{i=1}^{h} \, \bigg( \prod_{j=1}^{h} D_{[i+j-1]} \bigg)^{\!\!m}  \bigg(\prod_{j=1}^{r} D_{[i+j-1]}\bigg).
    \end{align*}
For simplicity, let us write
\begin{equation}\label{E:def-Eir}
D_{i,r} := D_{[i]}D_{[i+1]}\cdots D_{[i+r-1]}, \qquad r \geq 1.
\end{equation}
Using this notation, it follows that
\[
P^{\intercal}D^{mh+r}P = \left( \bigoplus_{i=1}^{h} D_{i,h}^m D_{i,r} \right) \!\left( K_{h}^{r} \otimes I_{n'} \right).
\]
Now, observe that
\[
\bigoplus_{i=1}^{h} D_{i,h} = \bigoplus_{i=1}^{h} \prod_{j=1}^{h} D_{[i+j-1]} = P^{\intercal} D^h P.
\]
Hence, the number of eigenvalues of unit modulus of $\bigoplus_{i=1}^{h} D_{i,h}$ and $D^h$ is the same, and since the number of unimodular eigenvalues of $D^k$ is independent of $k\geq 1$, we have
\[
h = h(D) = h(D^h) = h(P^{\intercal} D^h P) = h(\oplus_{i=1}^{h} D_{i,h}) = \sum_{i=1}^{h} h(D_{i,h}) \geq h,
\]
where the inequality follows from the fact that each $D_{i,h}$ is doubly stochastic. Therefore, the inequality must be saturated, i.e.,
\[
h(D_{i,h})=1,\qquad 1\leq i \leq h.
\]
Hence, it follows from \eqref{eq - conv-J} and from the fact that $J_nA=J_n$ for every $A\in\ds$ that
\begin{align*}
    \lim_{m\to\infty} P^{\intercal}D^{mh+r}P \,&= \lim_{m\to\infty} \left( \bigoplus_{i=1}^{h} D_{i,h}^m D_{i,r} \right) \!\left( K_{h}^{r} \otimes I_{n'} \right) \\
    &=  \left( \bigoplus_{i=1}^{h} J_{n'} D_{i,r} \right) \!\left( K_{h}^{r} \otimes I_{n'} \right) \\
    &= \left( \bigoplus_{i=1}^{h} J_{n'} \right) \!\left( K_{h}^{r} \otimes I_{n'} \right)\\
    &=\, K_{h}^{r} \otimes J_{n'}. \tag*{\qedhere}
\end{align*}
\end{proof}

\Cref{thm - convergence_general_simple} tells us that the structure of the matrix $P^{\intercal} D^{m} P$ follows a cyclic pattern, while each individual non-zero submatrices semi-converges to the matrix $J_{n'}$. However, we can say even more. Indeed, we can show that the semi-convergence of each individual element is monotone. Therefore, while $P^{\intercal} D^{m} P$ does not itself converges in general, each submatrix gets closer to $J_{n'}$ in a monotonic way.

\begin{cor}
Let $D\in\ds$ be an irreducible doubly stochastic matrix and let $P$ be a permutation matrix such that
\[
P^{\intercal} D P = \left( \bigoplus_{i=1}^{h} D_{i}  \right) \!\left( K_{h} \otimes I_{n'} \right),
\]
where $h:=h(D)$ and $D_{i} \in \Omega_{n'}$, $1\leq i \leq h$ and $n'=n/h$. Then
\[
P^{\intercal} D^{m} P
=
\left( \bigoplus_{i=1}^{h} D_{i,m}  \right) \!\left( K_{h}^m \otimes I_{n'} \right)
\]
and
\[
\|D_{i,m+1}-J_{n'}\|_2 \leq \|D_{i,m}-J_{n'}\|_2, \qquad 1\leq i \leq h,
\]
where $\|\cdot\|_2$ denotes the spectral norm.
\end{cor}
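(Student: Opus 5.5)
The first identity is \Cref{lem - useful} applied to $P^{\intercal}DP$, combined with the notation \eqref{E:def-Eir}. Indeed, $(P^{\intercal}DP)^m=P^{\intercal}D^mP$, and \Cref{lem - useful} gives the $i$th diagonal block as $\prod_{j=1}^{m} D_{[i+j-1]} = D_{[i]}D_{[i+1]}\cdots D_{[i+m-1]}$, which is exactly $D_{i,m}$. The indices are read mod $h$, with $[h]$ interpreted as $h$, consistently with the proof of \Cref{thm - convergence_general_simple}. So the display is immediate, and the work lies in the monotonicity.

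For the inequality, we use the definition \eqref{E:def-Eir}: $D_{i,m+1}=D_{i,m}D_{[i+m]}$, where $D_{[i+m]}\in\Omega_{n'}$. Set $A=D_{i,m}$ and $B=D_{[i+m]}$; both lie in $\Omega_{n'}$, since $\Omega_{n'}$ is closed under multiplication. The key algebraic step is the identity already used in the proof of \Cref{thm - convergence_general}:
\[
AB-J_{n'}=(A-J_{n'})(B-J_{n'}).
\]
It follows from $AJ_{n'}=J_{n'}A=J_{n'}$, $J_{n'}B=J_{n'}$ and $J_{n'}^2=J_{n'}$. Submultiplicativity of the spectral norm then gives
\[
\|D_{i,m+1}-J_{n'}\|_2\le \|D_{i,m}-J_{n'}\|_2\,\|D_{[i+m]}-J_{n'}\|_2 .
\]

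It remains to bound the last factor by $1$. This is exactly \eqref{E:submul-2}: by Birkhoff's theorem and the triangle inequality, $\|B-J_{n'}\|_2\le\|I_{n'}-J_{n'}\|_2=1$. Alternatively, it follows from the identification $\|B-J_{n'}\|_2=\sigma_2(B)\le\sigma_1(B)=1$. Combining these gives the claimed monotonicity for each $1\le i\le h$ and every $m\ge1$.

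I expect no real obstacle. The only point needing care is bookkeeping: checking that appending one factor on the right of $D_{i,m}$ produces $D_{i,m+1}$ (and not a cyclically shifted product), which is clear from \eqref{E:def-Eir}.
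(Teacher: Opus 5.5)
Your proof is correct and follows essentially the paper's argument: the first identity is just the power lemma rewritten with the notation $D_{i,m}$, and the monotonicity comes from $D_{i,m+1}=D_{i,m}D_{[i+m]}$ together with submultiplicativity of $\|\cdot\|_2$. The only difference is the factorization: the paper writes $D_{i,m+1}-J_{n'}=(D_{i,m}-J_{n'})D_{[i+m]}$ and uses $\|D_{[i+m]}\|_2=1$, whereas you write $(D_{i,m}-J_{n'})(D_{[i+m]}-J_{n'})$ and use $\|D_{[i+m]}-J_{n'}\|_2=\sigma_2(D_{[i+m]})\le 1$, which gives the slightly sharper contraction factor $\sigma_2(D_{[i+m]})$.
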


\begin{proof}
The first assertion is a restatement of \Cref{lem - useful}. Then observe that
\[
D_{i,m+1} = \prod_{j=1}^{m+1} D_{[i+j-1]}  = \Bigg( \prod_{j=1}^{m} D_{[i+j-1]} \Bigg) D_{[i+m]} = D_{i,m} D_{[i+m]}.
\]
Since, since $J_n D=J_n$ for any $D\in\ds$, we have
\[
D_{i,m+1}-J_{n'} = D_{i,m} D_{[i+m]} - J_{n'} = \left(D_{i,m}-J_{n'}\right)D_{[i+m]}.
\]
Additionally, as $D^{\intercal}D\in\ds$ and the spectral radius of any doubly stochastic matrix is equal to $1$, it directly follows that $\|D\|_2 = \sqrt{\rho(D^{\intercal}D)} = 1$ for any doubly stochastic matrix $D$. Therefore,
\begin{align*}
    \|D_{i,m+1}-J_{n'}\|_2 &= \left\|\left(D_{i,m}-J_{n'}\right)D_{[i+m]} \right\|_2 \\
    &\leq \left\|D_{i,m}-J_{n'}\right\|_2 \left\|D_{[i+m]} \right\|_2 \\
    &= \left\|D_{i,m}-J_{n'}\right\|_2,
\end{align*}
which concludes the proof.
\end{proof}

%%%%%%%%%%%%%%%%%%%%%%%%%%%%%%%%%%%%%%%%%%%%%%%%%%%%%%%

\subsection{Cyclic and primitive doubly stochastic matrices}

In the following theorem, we characterize the irreducible doubly stochastic matrices which describe a cycle of order $p$.

\begin{thm}\label{thm - cyclic}
Let $D$ be an $n\times n$ irreducible doubly stochastic matrix. Then $D$ is cyclic  if and only if there exists a permutation matrix $P$ such that
\begin{equation}\label{eq - form_D}
    P^{\intercal}DP = K_{h} \otimes J_{n'}.
\end{equation}
where $h:=h(D)$ and $n'=n/h$. Moreover, in that case, $D$ is cyclic of order $h$.
\end{thm}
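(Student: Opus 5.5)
The plan is to prove the two implications separately, using \Cref{thm - convergence_general_simple} for the hard direction and a direct computation for the easy one.

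\emph{Sufficiency.} Suppose $P^{\intercal}DP = K_h\otimes J_{n'}$. By the mixed-product rule for Kronecker products and $J_{n'}^2=J_{n'}$,
\[
(K_h\otimes J_{n'})^m = K_h^m\otimes J_{n'}.
\]
Hence $D^{m} = P(K_h^{m}\otimes J_{n'})P^{\intercal}$ for every $m\ge 1$. Since $K_h^{h+1}=K_h$, this gives $D^{h+1}=D$. For $1\le m<h$ we have $K_h^{m+1}\neq K_h$, because $K_h$ is the matrix of a cyclic permutation of order exactly $h$. As $J_{n'}\neq 0$, it follows that $K_h^{m+1}\otimes J_{n'}\neq K_h\otimes J_{n'}$, so $D^{m+1}\neq D$. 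Thus $D$ is cyclic of order $h$. This also settles the ``moreover'' clause, once necessity shows the form always holds.

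\emph{Necessity.} Suppose $D^{p+1}=D$ for some $p\ge1$. Then $D^{k+p}=D^k$ for all $k\ge1$, so the sequence $(D^m)_{m\geq 1}$ is periodic with period $p$. Take the permutation matrix $P$ from \eqref{eq - Perron-Frob form} (it exists by Marcus--Minc--Moyls and irreducibility). \Cref{thm - convergence_general_simple} gives, for each $1\le r\le h$,
\[
D^{mh+r}\to P(K_h^r\otimes J_{n'})P^{\intercal}.
\]
In particular $D^{mh+1}\to P(K_h\otimes J_{n'})P^{\intercal}$. Next, choose $m=kp$. Since $kph+1\equiv 1 \pmod p$, periodicity gives $D^{kph+1}=D$ for every $k\geq 1$. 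This is a constant subsequence of $(D^{mh+1})_m$, and it converges to the same limit. Therefore $D=P(K_h\otimes J_{n'})P^{\intercal}$, which is \eqref{eq - form_D}.

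The only delicate point is making sure the subsequence argument is legitimate. Specifically, the indices $kph+1$ are exactly of the form $mh+1$ with $m=kp\to\infty$, and periodicity lets us reduce them to exponent $1$. Both facts are immediate.

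A second check concerns the $h$ appearing in \eqref{eq - form_D}: it should be $h(D)$, and the $P$ used should be the one from \eqref{eq - Perron-Frob form}. Both hold by construction, since we used exactly that decomposition. Finally, in the sufficiency direction, the order is exactly $h$ regardless of any a priori relation to $h(D)$. As a sanity check, $K_h\otimes J_{n'}$ indeed has exactly $h$ unimodular eigenvalues, namely the $h$-th roots of unity times the eigenvalue $1$ of $J_{n'}$. So the $h$ in the statement is consistent with $h(D)$.
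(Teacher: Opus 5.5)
Your proof is correct, but your necessity direction takes a genuinely different route from the paper's.

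You treat the statement as a corollary of \Cref{thm - convergence_general_simple}. Since $D^{1+jp}=D$ for all $j\ge 0$, the convergent sequence $(D^{mh+1})_{m}$ contains the constant subsequence indexed by $m=kp$. Hence $D$ must equal the limit $P(K_h\otimes J_{n'})P^{\intercal}$.

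The paper instead argues algebraically. By Schur triangularization every eigenvalue satisfies $\lambda^{p+1}=\lambda$. Combined with the rotational invariance of the spectrum, this shows the eigenvalues are the $h$-th roots of unity together with zeros. So each diagonal block $D_{i,h}$ of $P^{\intercal}D^hP$ has $1$ as its only nonzero eigenvalue, and \Cref{lem - eigenvalues_0_carac} forces $D_{i,h}=J_{n'}$. A block-position argument then gives $h\mid p$, and finally $P^{\intercal}DP=(P^{\intercal}DP)(P^{\intercal}D^hP)^{p/h}=K_h\otimes J_{n'}$.

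Your version is shorter and sidesteps two points the paper must handle separately. First, you never need $h\mid p$. Second, you never need the geometric (not merely algebraic) multiplicity of $0$ in $D_{i,h}$ that \Cref{lem - eigenvalues_0_carac} requires. In the paper that multiplicity holds only because $D^{p+1}=D$ makes $D$ diagonalizable (its minimal polynomial divides $x^{p+1}-x$), and the paper uses this without comment.

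In exchange, the paper's argument is purely finite and algebraic, with no appeal to limits. Along the way it also yields the explicit spectrum of a cyclic irreducible $D$ and the divisibility $h\mid p$. In your approach, $h\mid p$ follows only afterwards, from the fact established in your sufficiency part that the order is exactly $h$.

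Your sufficiency direction is an explicit version of what the paper dismisses as the simpler reverse argument.
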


\begin{proof}
Suppose that $D^{p+1}=D$. By \eqref{eq - Perron-Frob form}, there is a permutation matrix $P$ such that
\[
P^{\intercal} D P = \left( \bigoplus_{i=1}^{h} D_{i}  \right) \!\left( K_{h} \otimes I_{n'} \right),
\]
where $h=h(D)$, $D_{i} \in \Omega_{n'}$, $1\leq i \leq h$, and $n' = n/h$. Let $\lambda_1, \lambda_2,\dots,\lambda_{n}$ denotes the eigenvalues of $D$ and let $U$ be the unitary matrix in a Schur triangularization of $D$ (see \cite[Theorem 2.3.1]{HornJohnson2013}), i.e., a unitary matrix which satisfy
\[
U^* DU = \begin{bmatrix}
		\lambda_1 & 0 &\!\!\cdots\!\! & 0 \\
		* & \lambda_2 & \!\!\cdots\!\! & 0 \\[-3pt]
		\vdots & \vdots & \!\!\ddots \!\!&\vdots \\
		* & * & \!\!\cdots\!\! & \lambda_n
	\end{bmatrix}.
\]
Then, since $D=D^{p+1}$, we have
\[
\begin{bmatrix}
\lambda_1 & 0 &\!\!\cdots\!\! & 0 \\
* & \lambda_2 & \!\!\cdots\!\! & 0 \\[-3pt]
\vdots & \vdots & \!\!\ddots \!\!&\vdots \\
* & * & \!\!\cdots\!\! & \lambda_n
\end{bmatrix}
=
U^* DU = U^* D^{p+1}U = (U^* DU)^{p+1}
=
\begin{bmatrix}
    \lambda_1^{p+1} & 0 &\!\!\cdots\!\! & 0 \\
    * & \lambda_2^{p+1} & \!\!\cdots\!\! & 0 \\[-3pt]
    \vdots & \vdots & \!\!\ddots \!\!&\vdots \\
    * & * & \!\!\cdots\!\! & \lambda_n^{p+1}
\end{bmatrix}.
\]
Consequently, $\lambda_k=\lambda_k^{p+1}$ for all $k \geq 1$; indeed, if a matrix satisfies a polynomial equation, then each of its eigenvalues must be a root of that polynomial. Thus, the eigenvalues of the matrix $D$ are either of unit modulus, or they are equal to 0. However, by the definition of $h$, we know that $D$ has exactly $h$ eigenvalues of unit modulus. Furthermore, the Perron--Frobenius theorem guarantees that the spectrum of $D$ is invariant under multiplication by $e^{\frac{2\pi i}{h}}$. Therefore, the unimodular eigenvalues of $D$ are precisely $e^{\frac{2\pi k i}{h}}$, $1\leq k \leq h$, along with $n-h$ other eigenvalues, all equal to 0.

In particular, it follows that the eigenvalues of $D^{h}$ are 1s (with geometric multiplicity $h$) and $0$s (with geometric multiplicity $n-h$). However, by \Cref{lem - useful},
    \begin{align}\label{eq - cyclique}
         P^{\intercal}D^{h}P &= \begin{bmatrix}
            D_1 \cdots D_{h} & 0 & \cdots & 0 \\
            0 & D_2 \cdots D_{h} D_1 & \cdots & 0 \\[-4pt]
            \vdots & \vdots & \ddots & 0 \\
            0 & 0 & \cdots & D_{h} D_1 \cdots D_{h-1}
        \end{bmatrix} \\
        &=:\! \begin{bmatrix}
            D_{1,h} & 0 & \cdots & 0 \\
            0 & D_{2,h} & \cdots & 0 \\[-4pt]
            \vdots & \vdots & \ddots & 0 \\
            0 & 0 & \cdots & D_{h,h}
        \end{bmatrix}.\nonumber
    \end{align}
Recall the definition of $D_{i,r}$ from \eqref{E:def-Eir}. Since all the matrices in the main diagonal are $n' \times n'$ doubly stochastic matrices, they must all have at least one eigenvalue equal to $1$. Thus, since $D$ has precisely $h$ unimodular eigenvalues, each doubly stochastic matrix in the main diagonal must contains a unique unimodular eigenvalue (equal to 1) while every other eigenvalue is equal to 0. Hence, it follows from \Cref{lem - eigenvalues_0_carac} that $D_{k,h}=J_{n'}$ for every $1\leq k \leq h$.

Lastly, because of the particular form of $D$, given by \eqref{eq - form_D}, the order $p$ must be of the form $p=mh$, for some integer $m\geq 0$, since otherwise we would find using \Cref{lem - useful} that the nonzero block matrices of $P^{\intercal}DP$ and $P^{\intercal}D^{p} P$ would not coincide, which is a contradiction.

Therefore, on the one hand, we have
\[
P^{\intercal}DP =
\begin{bmatrix}
		0&\!\!\!\smash{D_1}&\!\!\!0&\!\!\!\cdots&\!\!0 \\
		0&\!\!\!0&\!\!\!\smash{D_2}&\!\!\!\cdots&\!\!0 \\[-3pt]
		\vdots&\!\!\!\vdots&\!\!\!\vdots&\!\!\!\ddots&\!\!\vdots \\
		0&\!\!\!0&\!\!\!0&\!\!\!\cdots&\!\!\smash{D_{h-1}} \\
		\smash{D_{h}}&\!\!\!0&\!\!\!0&\!\!\!\cdots&\!\!0
\end{bmatrix}.
\]
On the other hand,
\begin{align*}
P^{\intercal}DP &= P^{\intercal}D^p P =(P^{\intercal}DP)^{1+mh} = P^{\intercal}D P\big(P^{\intercal}D^{h} P\big)^m \\
	&= P^{\intercal} DP ( J_{n'}  \oplus \cdots \oplus J_{n'})^m = P^{\intercal}DP ( J_{n'}  \oplus \cdots \oplus J_{n'}) \\
	&= \begin{bmatrix}
		0&\!\!\!\!\!\!\smash{J_{n'}}&\!\!\!0&\!\!\!\cdots&\!\!0 \\
		0&\!\!\!0&\!\!\!\!\smash{J_{n'}}&\!\!\!\cdots&\!\!0 \\[-3pt]
		\vdots&\!\!\!\vdots&\!\!\!\vdots&\!\!\!\ddots&\!\!\vdots \\
		0&\!\!\!0&\!\!\!0&\!\!\!\cdots&\!\!\!\smash{J_{n'}} \\
		\smash{J_{n'}}\!\!\!&\!\!\!0&\!\!\!0&\!\!\!\cdots&\!\!0
	\end{bmatrix} = K_{h} \otimes J_{n'}.
\end{align*}
The above argument is reversible, and the reverse is even simpler.
\end{proof}

We know from \eqref{eq - conv-J} that if $D\in\ds$, then $\lim_{m\to\infty} D^m = J_n$ if and only if $h(D)=1$. As an easy corollary of \Cref{thm - convergence_general_simple}, we shall complete the picture by showing  that $\lim_{m\to\infty}D^m$ exists if and only if $1$ is the only eigenvalue of unit modulus, i.e., $h(D)=1$. It is worth mentioning that Fritz, Huppert and Willems \cite{Fritz1979} provided a sufficient condition which was very close to be necessary (see \cref{convergence_simple}). But our result given here is, on the one hand, the consequence of a much more general result and, on the other hand, necessary to fully characterize the limiting behavior of the sequence $(D^m)_{m\geq 1}$.

\begin{thm}\label{thm - converging}
Let $D \in \Omega_n$ be irreducible. Then $\lim_{m\to\infty}D^m$ exists if and only if $D$ is primitive. Moreover, if so, we necessarily have $\lim_{m\to\infty}D^m = J_n$.
\end{thm}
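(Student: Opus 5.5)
The plan is to derive both directions from \Cref{thm - convergence_general_simple} and the characterization \eqref{eq - conv-J}. Recall that, by \eqref{thm - car_irr - prim}, an irreducible $D\in\ds$ is primitive exactly when $h(D)=1$.

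For the ``if'' direction, suppose $D$ is primitive, so $h(D)=1$. Then \eqref{eq - conv-J} immediately gives $\lim_{m\to\infty} D^m = J_n$. This proves existence of the limit and identifies it, which also settles the ``moreover'' clause in this direction. Alternatively, one can apply \Cref{thm - convergence_general_simple} with $h=1$. The decomposition then reads $P^{\intercal}DP = D_1$ with $n'=n$ and $K_1=1$, so the limit is $P(1\otimes J_n)P^{\intercal}=PJ_nP^{\intercal}=J_n$, using the permutation invariance of $J_n$.

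For the ``only if'' direction, I would argue by contraposition. Assume $D$ is irreducible but not primitive, so $h:=h(D)\geq 2$. Take the permutation matrix $P$ from \eqref{eq - Perron-Frob form}. By \Cref{thm - convergence_general_simple}, the subsequences $D^{mh+r}$ converge to $L_r:=P(K_h^r\otimes J_{n'})P^{\intercal}$ for each $1\leq r\leq h$. If $\lim_m D^m$ existed, every subsequence would have the same limit, which forces $L_1=L_h$. But $K_h^1\otimes J_{n'}\neq K_h^h\otimes J_{n'}=I_h\otimes J_{n'}$. Indeed, the first matrix is supported on the off-diagonal block positions $(i,[i+1])$, while the second is supported on the diagonal blocks. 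This uses $h\geq 2$, since the positions $(i,[i+1])$ and $(i,i)$ are then distinct. Conjugating by $P$ preserves this inequality, so $L_1\neq L_h$, a contradiction. Hence the limit does not exist.

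The ``moreover'' clause then follows: whenever the limit exists, $D$ is primitive and, by the first paragraph, the limit is $J_n$. There is no real obstacle here. The only point needing care is checking that the two subsequential limits differ when $h\geq 2$, and the block-support comparison above handles that directly. An alternative route to the same conclusion is through eigenvalues: if the limit existed, Hensel's criterion would force every unimodular eigenvalue of $D$ to equal $1$. However, by part (iii) of the Perron--Frobenius theorem, $e^{2\pi i/h}\neq 1$ is an eigenvalue of $D$ when $h\geq 2$.
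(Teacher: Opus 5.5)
Your proposal is correct and follows essentially the paper's argument. In both, the only-if direction uses \Cref{thm - convergence_general_simple}: when $h\geq 2$, the subsequential limits $P(K_h^r\otimes J_{n'})P^{\intercal}$ differ for different $r$, and the primitive case gives $P(K_1\otimes J_n)P^{\intercal}=J_n$. Your explicit block-support check that $K_h\otimes J_{n'}\neq I_h\otimes J_{n'}$ for $h\geq 2$ fills in a detail the paper asserts without comment, and citing \eqref{eq - conv-J} for the if direction is an equivalent shortcut.
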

\begin{proof}
We have the representation  \cref{eq - Perron-Frob form-2}. Moreover, by \Cref{thm - convergence_general_simple}, for every $1\leq r \leq h$, we have
    \[
    D^{mh+r} ~\xrightarrow{m\to\infty}~ P\left(K_{h}^r \otimes J_{n'} \right) P^{\intercal}.
    \]
    Therefore, since $h\geq 1$, a necessary condition for the limit of the powers of $D$ to exist is that $h=1$ (i.e., that $D$ is primitive), otherwise different values of $r$ would yield distinct subsequences with unequal limits. Moreover, if $h=1$, then $r=1$ and \Cref{thm - convergence_general_simple} once again give
    \[
    \lim_{m\to\infty} D^m = \lim_{m\to\infty} D^{m+1} = P\left(K_{1} \otimes J_{n} \right) P^{\intercal} = J_n. \tag*{\qedhere}
    \]
\end{proof}

\begin{cor}
If $m \geq 1$ and $X\in\Omega_n$, then
\[
X^m=J_n
\]
if and only if  $X=J_n+N\geq 0$ (entrywise), where $N$ is a matrix whose rows and columns sum to 0 and for which $N^m=0$.
\end{cor}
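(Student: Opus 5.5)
Set $N := X - J_n$ and reduce the statement to the identity $(D - J_n)^m = D^m - J_n$ from the proof of \Cref{thm - convergence_general}, now with all factors equal.

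For the forward direction, assume $X\in\ds$ and $X^m = J_n$. Put $N := X - J_n$. Then $X = J_n + N$, and $J_n + N = X \geq 0$ entrywise because $X$ is doubly stochastic. Every row and column of $X$ and of $J_n$ sums to $1$, so every row and column of $N$ sums to $0$. Since $XJ_n = J_nX = J_n$ and $J_n^2 = J_n$, the same induction as in \Cref{thm - convergence_general} gives
\[
N^m = (X - J_n)^m = X^m - J_n .
\]
Hence $N^m = J_n - J_n = 0$.

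For the converse, assume $X = J_n + N \geq 0$, with the rows and columns of $N$ summing to $0$ and $N^m = 0$. The row and column sum conditions say $Ne = 0$ and $e^{\intercal}N = 0$. Therefore $NJ_n = \tfrac1n Nee^{\intercal} = 0$ and $J_nN = 0$. Two consequences follow:
\begin{enumerate}[(i)]
\item each row and column of $X$ sums to $1$; combined with $X \geq 0$ this gives $X \in \ds$;
\item $J_n$ and $N$ commute with $J_nN = NJ_n = 0$, so the binomial expansion of $(J_n + N)^m$ keeps only the pure terms, giving $X^m = J_n^m + N^m = J_n + 0 = J_n$.
\end{enumerate}

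There is no real obstacle. The key point is that $J_n$ annihilates $N$ on both sides, which is exactly what the row and column sum conditions encode. Alternatively, one can phrase this through \Cref{lem - A-J_v.p.}: $X^m = J_n$ is equivalent to $X - J_n$ being nilpotent of index at most $m$.
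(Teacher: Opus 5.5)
Your proof is correct and follows essentially the paper's route: the paper also sets $N := X - J_n$, uses $J_nN = NJ_n = 0$ to obtain $X^m = J_n + N^m$, and reads off both directions at once. Your check that $X\in\Omega_n$ in the converse is redundant given the hypothesis but harmless, and your closing aside slightly misattributes the equivalence: \Cref{lem - A-J_v.p.} only gives the spectrum of $X-J_n$, so it yields nilpotency but not the bound $N^m=0$, which comes from the identity $X^m - J_n = N^m$ itself.
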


\begin{proof}
	Since $X \in \Omega_n$, we have
	\[
	XJ_n = J_nX = J_n .
	\]
	Set $N := X - J_n$. Then
	\[
	N e=0, \qquad e^{\intercal}N=0, \qquad J_nN = NJ_n = 0.
	\]
	Hence, by induction,
	\[
	X^m = (J_n+N)^m = J_n + N^m .
	\]
	Consequently,
	\[
	X^m = J_n \quad \Longleftrightarrow \quad N^m = 0.
	\]
	Finally, the condition $J_n+N \geq 0$ is exactly the nonnegativity condition on $X$.
\end{proof}

%%%%%%%%%%%%%%%%%%%%%%%%%%%%%%%%%%%%%%%%%%%%%%%%%%%%%%%

\subsection{The general case}

In this last part we consider a general doubly stochastic matrix $D$, which is not necessarily irreducible. In this case, the behavior of $D^m$ as $m\to\infty$ depends on the asymptotic behavior of its irreducible submatrices when it is expressed as the direct sum $P^{\intercal}DP = D_1\oplus\cdots\oplus D_r$. Therefore, it is possible for some components to converge while other components may diverge or be cyclic. We use the previously established results to characterize the general structure of doubly stochastic matrices for which the powers converge or exhibit a cycle. Moreover, we also show that the powers of a uniformly chosen doubly stochastic matrix will \emph{almost always} converge to the uniform matrix $J_n$. We begin by characterizing the doubly stochastic matrices which describes a cycle of order $p$.

\begin{thm}
Let $D \in \Omega_n$. Then $D$ is cyclic of order $p$ if and only if there exists a permutation matrix $P$ and positive integers $r, h_i,k_i$, $1\leq i \leq r$, satisfying $p=\operatorname{LCM}(h_1,h_2,\dots,h_r)$ and $h_1k_1 + h_2k_2 + \cdots + h_r k_r =n$ such that
    \[
    P^{\intercal}DP = \bigoplus_{j=1}^r \left( K_{h_j} \otimes J_{k_j} \right).
    \]
    In that case, $D$ has $r$ eigenvalues equal to $1$ and $h(D)=h_1+h_2+\cdots+h_r$.
\end{thm}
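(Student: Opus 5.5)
The plan is to reduce to the irreducible case through the Perfect--Mirsky decomposition \eqref{thm - irr}, then apply \Cref{thm - cyclic} to each summand. Choose a permutation $P_0$ with $P_0^{\intercal}DP_0 = D_1\oplus\cdots\oplus D_r$, where each $D_j\in\Omega_{n_j}$ is irreducible. If $D$ is irreducible, take $r=1$. Since $(P_0^{\intercal}DP_0)^{m}=D_1^m\oplus\cdots\oplus D_r^m$, the identity $D^{p+1}=D$ holds if and only if $D_j^{p+1}=D_j$ for every $j$. So $D$ is cyclic (of some order) if and only if every $D_j$ is cyclic.

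For the forward direction, assume $D$ is cyclic of order $p$. Then each $D_j$ is cyclic, and \Cref{thm - cyclic} gives a permutation $Q_j$ with $Q_j^{\intercal}D_jQ_j = K_{h_j}\otimes J_{k_j}$, where $h_j=h(D_j)$, $k_j=n_j/h_j$, and $D_j$ is cyclic of order $h_j$. Set $P=P_0(Q_1\oplus\cdots\oplus Q_r)$. This $P$ is a permutation matrix and produces the displayed block form, with $\sum h_jk_j=\sum n_j=n$.

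The order needs one check. The set $\{q\geq1: D_j^{q+1}=D_j\}$ equals $h_j\mathbb{N}$. To see this, note that $(K_{h}\otimes J_k)^{q+1}=K_h^{q+1}\otimes J_k$ because $J_k^2=J_k$. This equals $K_h\otimes J_k$ exactly when $K_h^{q}=I$, that is, when $h\mid q$. Hence $D^{q+1}=D$ if and only if $h_j\mid q$ for all $j$. The least such $q$ is $\operatorname{LCM}(h_1,\dots,h_r)$, so $p=\operatorname{LCM}(h_1,\dots,h_r)$.

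For the converse, start from the block form and compute directly with the same identity: $(P^{\intercal}DP)^{q+1}=\bigoplus_j K_{h_j}^{q+1}\otimes J_{k_j}$. The same divisibility argument shows that the order is exactly the LCM. Note that the converse does not require the blocks to come from an irreducible decomposition. Only the algebraic identity is used.

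For the spectral claims, $K_h\otimes J_k$ has eigenvalues equal to the products of the eigenvalues of $K_h$ and $J_k$. The eigenvalues of $K_h$ are the $h$-th roots of unity, and $J_k$ has eigenvalue $1$ once and $0$ with multiplicity $k-1$. So each block contributes the $h_j$ roots of unity, each simple, and zeros otherwise. Therefore $1$ has multiplicity exactly one per block, giving $r$ eigenvalues equal to $1$, and the unimodular eigenvalues total $h(D)=h_1+\cdots+h_r$.

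The main obstacle is keeping the bookkeeping consistent: the $r$, $h_j$, $k_j$ in the statement must match the irreducible decomposition in the forward direction. This works because \Cref{thm - cyclic} identifies $h_j=h(D_j)$, which is exactly the period. Beyond that, the only real work is the divisibility lemma $K_h^{q}=I\iff h\mid q$, which holds because $K_h$ is the permutation matrix of an $h$-cycle.
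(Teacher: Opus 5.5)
Your proposal is correct and follows the same route as the paper: the Perfect--Mirsky decomposition into irreducible summands, \Cref{thm - cyclic} applied to each summand, and the order obtained as $\operatorname{LCM}(h_1,\dots,h_r)$. Your divisibility check ($K_h^{q}=I\iff h\mid q$), your direct verification of the converse, and your Kronecker-product eigenvalue count go further than the paper, which leaves the converse and the spectral claims unproved.
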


\begin{proof}
According to the representation \cref{thm - irr}, there is a permutation matrix $P$ such that
\[
P^{\intercal} D P = D_1 \oplus D_2 \oplus \cdots \oplus D_r,
\]
where each $D_i\in\Omega_{n_i}$ is irreducible with $n_1+\cdots+n_r=n$. Now, $D$ is cyclic of order $p$ if and only if $P^{\intercal} D P$ is cyclic of order $p$. The latter happens if and only if $D_i^{p+1}=D_i$ for all $1\leq i \leq r$ and, moreover, for each integer $m$ with $1\leq m<p$, there exist at least one index $1\leq k\leq r$ such that $D_k^{m+1}\neq D_k$. In particular, every submatrix $D_i$ must be cyclic of order $p_i | p$. Hence, \Cref{thm - cyclic} guarantees that for each $D_i$, there exists a permutation matrix $P$ such that
    \[
    P_i^{\intercal}D_iP_i = K_{p_i} \otimes J_{n_i/p_i} .
    \]
    Therefore, writing $h_i:=p_i$ and $k_i:= n_i/p_i$, we find that there exists a permutation matrix $Q$ such that
    \[
    Q^{\intercal}DQ = \bigoplus_{j=1}^r \left( K_{h_j} \otimes J_{k_j} \right).
    \]
Moreover, each submatrix is cyclic of order $h_j$ and thus, the smallest positive integer $p$ such that $D^{p+1}=D$ is $p=\operatorname{LCM}(h_1,h_2,\dots,h_r)$.
\end{proof}

We now focus on characterizing the doubly stochastic matrices for which the powers semi-converge to some doubly stochastic matrix. As in the previous case, we do not assume that the matrix is irreducible. Recall \eqref{thm - irr} which is used below.

\begin{thm}\label{thm - converging_general}
Let $D \in \Omega_n$, and let $h=h(D)$. Let $P$ be a permutation matrix such that
    \[
    P^{\intercal}DP = D_1 \oplus D_2 \oplus \cdots \oplus D_r,
    \]
    where each $D_i \in \Omega_{n_i}$ is irreducible. Then $D^m$ semi-converges as $m\to\infty$ if and only if $D$ has precisely $h$ eigenvalues equal to $1$. Moreover, in this case,  $r=h$ and $\lim_{m\to\infty} D^m = P (J_{n_1} \oplus J_{n_2} \oplus \cdots \oplus J_{n_r}) P^{\intercal}$.
\end{thm}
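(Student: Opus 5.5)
The plan is to reduce everything to the irreducible blocks via the Perron--Mirsky decomposition \eqref{thm - irr} and then apply \Cref{thm - converging} to each block. Since $P^{\intercal}D^mP = D_1^m\oplus\cdots\oplus D_r^m$ and conjugation by a fixed permutation matrix is a homeomorphism of $M_n$, the sequence $D^m$ semi-converges if and only if each $D_i^m$ semi-converges. By \Cref{thm - converging}, the latter holds if and only if every $D_i$ is primitive, i.e.\ $h(D_i)=1$ for all $i$. In that case each limit is $J_{n_i}$, which yields the stated limit $P(J_{n_1}\oplus\cdots\oplus J_{n_r})P^{\intercal}$.

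The remaining step is to translate ``every $D_i$ is primitive'' into the spectral condition ``$D$ has precisely $h$ eigenvalues equal to $1$.'' The spectrum of $D$, with multiplicities, is the union of the spectra of the $D_i$. Hence
\[
h(D)=\sum_{i=1}^r h(D_i).
\]
By \eqref{thm - car_irr}, each irreducible $D_i$ has $1$ as a simple eigenvalue, so the algebraic multiplicity of $1$ as an eigenvalue of $D$ is exactly $r$. Since each $D_i$ contributes at least one unimodular eigenvalue (namely $1$), we always have $h(D)\ge r$. Equality $h(D)=r$ holds if and only if $h(D_i)=1$ for every $i$, which by \eqref{thm - car_irr - prim} means every $D_i$ is primitive. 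So ``$D$ has precisely $h$ eigenvalues equal to $1$'' is equivalent to $r=h$, and this is equivalent to each block being primitive. This also gives the assertion $r=h$ in the ``moreover'' part.

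The only point needing care is the counting of multiplicities. I will read ``$h$ eigenvalues equal to $1$'' as algebraic multiplicity, consistent with how $h$ itself counts eigenvalues with multiplicity. Since each $D_i$ contributes exactly one eigenvalue $1$ by irreducibility, the algebraic and geometric multiplicities of $1$ coincide here, so no ambiguity arises. I expect no real obstacle beyond stating this bookkeeping cleanly, together with the remark that the case $r=1$ is simply \Cref{thm - converging} itself.
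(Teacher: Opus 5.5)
Your proposal is correct and follows the same route as the paper, whose proof simply invokes the block decomposition \eqref{thm - irr} together with \Cref{thm - converging} applied to each irreducible summand. The only difference is that you make explicit the multiplicity count $h(D)=\sum_{i=1}^r h(D_i)\ge r$, with equality exactly when every $D_i$ is primitive, which the paper leaves to the reader.
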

\begin{proof}
    The proof is a direct consequence of \eqref{thm - irr} and \Cref{thm - converging}.
\end{proof}

%%%%%%%%%%%%%%%%%%%%%%%%%%%%%%%%%%%%%%%%%%%%%%%%%%%%%%%
%%%%%%%%%%%%%%%%%%%%%%%%%%%%%%%%%%%%%%%%%%%%%%%%%%%%%%%
%%%%%%%%%%%%%%%%%%%%%%%%%%%%%%%%%%%%%%%%%%%%%%%%%%%%%%%
%%%%%%%%%%%%%%%%%%%%%%%%%%%%%%%%%%%%%%%%%%%%%%%%%%%%%%%

\section{A probabilistic result}

When randomly selecting a doubly stochastic matrix of order $n$, what level of certainty can we establish regarding the convergence of its powers? In this regard and with respect to Lebesgue measure, we can show that the powers of \emph{almost all} doubly stochastic matrices will converge to the matrix $J_n$. To verify this claim, recall that if the coefficients of an $n\times n$ matrix $A$ are nonnegative, then $A$ is primitive if and only if the coefficients of $A^m$ are positive for some $m \geq 1$ \cite[Theorem 8.5.2]{HornJohnson2013}. Hence, if the coefficients of $A\in \ds$ are positive, then $A$ is primitive and \Cref{thm - converging} ensures that $A^m \to J_n$ as $m\to \infty$. Consequently, it is enough to convince ourselves that \emph{almost all} doubly stochastic matrix are positive. Intuitively, it suffices to observe that the set $\ds^{+}$ of all-component positive doubly stochastic matrices is precisely the interior of the polytope $\ds$. Hence, since $\ds^{+}$ is nonempty, we should expect a uniformly chosen random doubly stochastic matrix to be in $\ds^{+}$.

\begin{thm}
For almost all doubly stochastic matrices $D$, $D^m$ converges to the matrix $J_n$ as $m\to\infty$.
\end{thm}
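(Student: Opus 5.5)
The plan is to show that the set of doubly stochastic matrices with at least one zero entry is Lebesgue-null inside $\ds$. Here $\ds$ is viewed as a polytope of dimension $(n-1)^2$ sitting in its affine hull
\[
\mathcal{A}=\{X\in M_n(\mathbb{R}) : Xe=e,\ e^{\intercal}X=e^{\intercal}\},
\]
and "almost all" refers to $(n-1)^2$-dimensional Lebesgue measure on $\mathcal{A}$, normalized on $\ds$. The key steps, in order, are as follows.

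\textbf{Step 1: positive matrices suffice.} As recalled just before the statement, a positive doubly stochastic matrix is primitive by \cite[Theorem 8.5.2]{HornJohnson2013} with $m=1$. Hence, by \Cref{thm - converging} (or directly by \eqref{eq - conv-J}, since $h(D)=1$), $D^m\to J_n$ for every $D\in\ds^{+}$. It therefore suffices to show that $\ds\setminus\ds^{+}$ has measure zero.

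\textbf{Step 2: the complement lies in finitely many hyperplanes.} We have $\ds\setminus\ds^{+}\subseteq\bigcup_{i,j}H_{ij}$, where $H_{ij}=\{X\in\mathcal{A}: x_{ij}=0\}$. Each $H_{ij}$ is an affine subspace of $\mathcal{A}$. It is a \emph{proper} subspace because $J_n\in\mathcal{A}$ has $(J_n)_{ij}=1/n\neq0$. A proper affine subspace has dimension at most $(n-1)^2-1$, so it is Lebesgue-null in $\mathcal{A}$. A finite union of null sets is null, which gives the claim.

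\textbf{Step 3: $\ds$ has positive measure.} For the normalized measure on $\ds$ to make sense, $\ds$ must have nonempty interior in $\mathcal{A}$. Take $X=J_n+N$ with $N\in\mathcal{A}-J_n$, i.e.\ $N$ having zero row and column sums, and $\|N\|_\infty<1/n$. Then $X$ is positive and doubly stochastic, so a full neighbourhood of $J_n$ in $\mathcal{A}$ lies in $\ds^{+}$. This shows that $\ds$ is full-dimensional in $\mathcal{A}$ and that $\ds^{+}$ is exactly its relative interior.

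The main obstacle is not analytic but definitional: making "almost all" precise. One must fix the affine hull $\mathcal{A}$ and its dimension $(n-1)^2$ so that the coordinate hyperplanes are genuinely lower-dimensional, rather than arguing in $\mathbb{R}^{n^2}$, where all of $\ds$ would be null. Once that is fixed, Steps 2 and 3 are routine.
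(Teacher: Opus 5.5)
Your proof is correct and follows the paper's argument. Positivity gives primitivity and hence $D^m\to J_n$; the matrices with a zero entry lie in finitely many codimension-one affine pieces; and $\Omega_n$ has positive $(n-1)^2$-dimensional measure.

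The only difference is presentational. You work intrinsically with Lebesgue measure on the affine hull $\mathcal{A}$ and use a ball around $J_n$, whereas the paper uses the chart $\Phi$ onto the upper-left $(n-1)\times(n-1)$ block, an explicit cube in $\mathcal{E}_{n-1}$, and Rogers' theorem. Since the block projection underlying $\Phi$ is an affine isomorphism from $\mathcal{A}$ onto $\mathbb{R}^{(n-1)^2}$, the two notions of null set agree.
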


\begin{proof}
As discussed before the theorem, it suffice to show that almost all doubly stochastic matrices are in $\Omega_n^+$. We define $\mathcal{E}_{n-1}$, $n \geq 2$, the subset of all $(n-1)\times (n-1)$ matrices as follows.
\begin{enumerate}[(i)]
\item The matrices in $\mathcal{E}_{n-1}$ are nonnegative (componentwise).
\item The row sums and column sums never exceed $1$.
\item The sum of all entries is larger or equal to $n-2$.
\end{enumerate}
In other words, $\mathcal{E}_{n-1}$ is the $(n-1)^2$-dimensional convex polytope in $\mathbb{R}^{(n-1)\times (n-1)}$ consisting of the $(n-1)\times (n-1)$ matrices $A=(a_{i,j})$ in the intersection of the closed half-planes
    \begin{gather}
        a_{i,j} \geq 0, \qquad (1\leq i,j \leq n-1), \label{eq1}\\
        \sum_{j=1}^{n-1} a_{i,j} \leq 1, \qquad (1\leq i \leq n-1), \label{eq2}\\
        \sum_{i=1}^{n-1} a_{i,j} \leq 1, \qquad (1\leq j \leq n-1), \label{eq3}\\
        \sum_{i,j=1}^{n-1} a_{i,j} \geq n-2. \label{eq4}
    \end{gather}

It is easily verified that the mapping $\Phi : \ds \to \mathcal{E}_{n-1}$, given by
    \[
    \begin{bmatrix}
        a_{1,1} & \cdots & a_{1,n-1} & a_{1,n} \\
        \vdots & \ddots & \vdots & \vdots \\
        a_{n-1,1} & \cdots & a_{n-1,n-1} & a_{n-1,n} \\
        a_{n,1} & \cdots & a_{n,n-1} & a_{n,n}
    \end{bmatrix}
    \longmapsto
    \begin{bmatrix}
    	a_{1,1} & \!\!\cdots\!\! & a_{1,n-1}  \\
    	\vdots & \!\!\ddots\!\! & \vdots  \\
    	a_{n-1,1} & \!\!\cdots\!\! & a_{n-1,n-1}
    \end{bmatrix},
    \]
    along with its inverse $\Phi^{-1} :  \mathcal{E}_{n-1} \to \ds$,
    \[
    \begin{bmatrix}
        a_{1,1} & \!\!\cdots\!\! & a_{1,n-1}  \\
        \vdots & \!\!\ddots\!\! & \vdots  \\
        a_{n-1,1} & \!\!\cdots\!\! & a_{n-1,n-1}
    \end{bmatrix}
    \longmapsto
    \begin{bmatrix}
        a_{1,1} & \!\cdots\! & a_{1,n-1} & 1-\sum a_{1,j} \\
        \vdots & \!\ddots\! & \vdots & \vdots \\
        a_{n-1,1} & \!\cdots\! & a_{n-1,n-1} & 1-\sum a_{n-1,j} \\
        1-\sum a_{i,1} & \!\cdots\! & 1-\sum a_{i,n-1} & 2-n + \sum a_{i,j}
    \end{bmatrix},
    \]
    is one-to-one.

Let $\ds^{0}$  denote the set of $n\times n$ doubly stochastic matrices with at least one null coefficient. At the same token, we consider $\ds^{0}(i,j)$,  the set of $n\times n$ doubly stochastic matrices with the coefficient in position $(i,j)$ being zero. Then put
\[
\mathcal{E}_{n-1}^0 := \Phi(\ds^{0}),
\quad\mbox{and}\quad
\mathcal{E}_{n-1}^0(i,j) := \Phi\big(\ds^{0}(i,j)\big).
\]
Now the following relations hold.
\begin{enumerate}[(i)]
        \item If $1\leq i,j \leq n-1$, then $\mathcal{E}_{n-1}^0(i,j)$ correspond to the polytope $\mathcal{E}_{n-1}^0$ with an equality in \eqref{eq1} corresponding to coefficients $(i,j)$.
        \item If $j=n$ and $i<n$, then $\mathcal{E}_{n-1}^0(i,j)$ correspond to the polytope $\mathcal{E}_{n-1}^0$ with an equality in \eqref{eq2} corresponding to coefficient $i$.
        \item If $i=n$ and $j<n$, then $\mathcal{E}_{n-1}^0(i,j)$ correspond to the polytope $\mathcal{E}_{n-1}^0$ with an equality in \eqref{eq3} corresponding to coefficient $j$.
        \item If $i=j=n$, then $\mathcal{E}_{n-1}^0(i,j)$ correspond to the polytope $\mathcal{E}_{n-1}^0$ with an equality in \eqref{eq4}.
\end{enumerate}

In each case, we lose a degree of freedom in the parameters of $\mathcal{E}_{n-1}^0$. Consequently, we have $\dim\big( \mathcal{E}_{n-1}^0(i,j) \big) = (n-1)^2-1 < (n-1)^2$. Therefore, if $m$ denotes the Lebesgue measure on $\mathbb{R}^{(n-1)^2}$, we trivially have $m\big(\mathcal{E}_{n-1}^0(i,j)\big)=0$ for all $1\leq i,j \leq n-1$. It thus follows that
    \[
    m\big(\mathcal{E}_{n-1}^0 \big) = m\left( \cup_{i,j} \mathcal{E}_{n-1}^0(i,j) \right) \leq \sum_{i,j} m\big( \mathcal{E}_{n-1}^0(i,j)\big) = 0.
    \]
    Moreover, observe that
\[
\left[ \tfrac{n-2}{(n-1)^2},\tfrac{1}{n}\right]^{(n-1)^2} \!\!\!\subseteq \mathcal{E}_{n-1},
\]
since for any element $A$ in the former set, $\Phi^{-1}(A) \in \Omega_n$. Consequently,
    \[
    m\big( \mathcal{E}_{n-1} \big) \geq m\left(\left[ \tfrac{n-2}{(n-1)^2},\tfrac{1}{n}\right]^{(n-1)^2}\right) = \left(n(n-1)^2\right)^{-(n-1)^2} > 0.
    \]
    Hence, almost all element of $\mathcal{E}_{n-1}$ belong to $\mathcal{E}_{n-1} \setminus \mathcal{E}_{n-1}^{0}$, the range of $\Phi$ of the positive doubly stochastic matrices. Since $\Phi$ satisfies the conditions of Theorem 31 in \cite{MR0281862}, the conclusion follows.
\end{proof}

\noindent {\em Remark}:  A $n \times n$ doubly stochastic matrix is completely determined by its $(n-1)\times (n-1)$ upper left entries.  There is thus a bijection between the set of $n\times n$ doubly stochastic matrices and the set $\mathcal{V}$ of vectors of $v\in (\mathbb{R}^{+})^{(n-1)^2}$ satisfying the conditions
\begin{enumerate}
\item 
$
\displaystyle \sum_{j=1}^{n-1}v_{m(n-1)+j}\le 1,~~~~~ \forall m \text{ such that } 0\le m\le n-2, 
$
\item 
$
\displaystyle \sum_{j=0}^{n-2} v_{j(n-1)+m}\le 1,~~~~~ \forall m  \text{ such that } 1\le m\le n-1,
$
\item 
$
\displaystyle \sum_{j=1}^{(n-1)^2} v_{j}\ge n-2.
$
\end{enumerate}
This bijection between doubly stochastic matrices and the convex set $\mathcal{V}$ makes it possible to sample matrices using the Lebesgue measure in $\mathbb{R}^{(n-1)^2}$.

From a practical point of view, random doubly stochastic matrices are usually generated by the Sinkhorn process, which consist of taking a random nonnegative matrix and iteratively normalizing the rows and column. This process always converges linearly to a doubly stochastic matrix, as long as the initial nonnegative matrix does not have \emph{too many zeros} (the formal condition is that the matrix has total
support; for more information, we refer to \cite{Sinkhorn}). The important observation here is that the zero coefficients in the doubly stochastic matrix obtained in the limit are precisely in the same positions as the initial nonnegative matrix. Consequently, as long as the initial random nonnegative matrices are obtained in such a way that almost all of them are positive, then almost all doubly stochastic matrices resulting from the Sinkhorn process will be positive and thus will converge to the matrix $J_n$.

%%%%%%%%%%%%%%%%%%%%%%%%%%%%%%%%%%%%%%%%%%%%%%%%%%%%%%%
%%%%%%%%%%%%%%%%%%%%%%%%%%%%%%%%%%%%%%%%%%%%%%%%%%%%%%%
%%%%%%%%%%%%%%%%%%%%%%%%%%%%%%%%%%%%%%%%%%%%%%%%%%%%%%%
%%%%%%%%%%%%%%%%%%%%%%%%%%%%%%%%%%%%%%%%%%%%%%%%%%%%%%%

\section*{Acknowledgement}

We sincerely thank the two anonymous referees for their insightful comments, which have significantly improved the quality of our article. We are also grateful to them for drawing our attention to the works of Hajnal \cite{MR96306, Hajnal} -- classic, to be sure, but presented in a different context and therefore not previously on our radar -- as well as to the related contributions of Wolfowitz \cite{Wolfowitz} and Chatterjee \cite{Chatterjee}. Their careful reading and thoughtful suggestions have been especially valuable.

\bibliographystyle{plain}
\bibliography{ref}

@article {MR96306,
    AUTHOR = {Hajnal, J.},
     TITLE = {Weak ergodicity in non-homogeneous {M}arkov chains},
   JOURNAL = {Proc. Cambridge Philos. Soc.},
  FJOURNAL = {Proceedings of the Cambridge Philosophical Society},
    VOLUME = {54},
      YEAR = {1958},
     PAGES = {233--246},
      ISSN = {0008-1981},
   MRCLASS = {60.00},
  MRNUMBER = {96306},
MRREVIEWER = {K.\ L.\ Chung},
}

@article {Boyd2006,
    AUTHOR = {Boyd, Stephen and Ghosh, Arpita and Prabhakar, Balaji and
              Shah, Devavrat},
     TITLE = {Randomized gossip algorithms},
   JOURNAL = {IEEE Trans. Inform. Theory},
  FJOURNAL = {Institute of Electrical and Electronics Engineers.
              Transactions on Information Theory},
    VOLUME = {52},
      YEAR = {2006},
    NUMBER = {6},
     PAGES = {2508--2530},
   MRCLASS = {94A05 (90B18)},
  MRNUMBER = {2238556},
}

@article{VOURDAS2022126911,
title = {Markov chains with doubly stochastic transition matrices and application to a sequence of non-selective quantum measurements},
journal = {Physica A: Statistical Mechanics and its Applications},
volume = {593},
pages = {126911},
year = {2022},
issn = {0378-4371},
doi = {⁦https://doi.org/10.1016/j.physa.2022.126911⁩},
url = {⁦https://www.sciencedirect.com/science/article/pii/S0378437122000358⁩},
author = {A. Vourdas}
}

@article {Wolfowitz,
    AUTHOR = {Wolfowitz, J.},
     TITLE = {Products of indecomposable, aperiodic, stochastic matrices},
   JOURNAL = {Proc. Amer. Math. Soc.},
  FJOURNAL = {Proceedings of the American Mathematical Society},
    VOLUME = {14},
      YEAR = {1963},
     PAGES = {733--737},
      ISSN = {0002-9939,1088-6826},
   MRCLASS = {15.65},
  MRNUMBER = {154756},
MRREVIEWER = {J.\ Kiefer},
       DOI = {10.2307/2034984},
       URL = {https://doi.org/10.2307/2034984},
}

@article {Chatterjee,
    AUTHOR = {Chatterjee, S. and Seneta, E.},
     TITLE = {Towards consensus: some convergence theorems on repeated
              averaging},
   JOURNAL = {J. Appl. Probability},
  FJOURNAL = {Journal of Applied Probability},
    VOLUME = {14},
      YEAR = {1977},
    NUMBER = {1},
     PAGES = {89--97},
      ISSN = {0021-9002,1475-6072},
   MRCLASS = {60J10},
  MRNUMBER = {428454},
MRREVIEWER = {Richard\ Madsen},
       DOI = {10.2307/3213262},
       URL = {https://doi.org/10.2307/3213262},
}

@article {Hajnal,
    AUTHOR = {Hajnal, J.},
     TITLE = {On products of non-negative matrices},
   JOURNAL = {Math. Proc. Cambridge Philos. Soc.},
  FJOURNAL = {Mathematical Proceedings of the Cambridge Philosophical
              Society},
    VOLUME = {79},
      YEAR = {1976},
    NUMBER = {3},
     PAGES = {521--530},
      ISSN = {0305-0041,1469-8064},
   MRCLASS = {15A48 (60J10)},
  MRNUMBER = {396628},
MRREVIEWER = {E.\ Seneta},
       DOI = {10.1017/S030500410005252X},
       URL = {https://doi.org/10.1017/S030500410005252X},
}

@book {Kemeny,
    AUTHOR = {Kemeny, John G. and Snell, J. Laurie},
     TITLE = {Finite {M}arkov chains},
    SERIES = {Undergraduate Texts in Mathematics},
      NOTE = {Reprinting of the 1960 original},
 PUBLISHER = {Springer-Verlag, New York-Heidelberg},
      YEAR = {1976},
     PAGES = {ix+210},
   MRCLASS = {60J10 (60J20)},
  MRNUMBER = {410929},
}

@article {bmm1,
AUTHOR = {Bouthat,Ludovick and Mashreghi, Javad and Morneau-Guérin, Frédéric},
     TITLE = {On the {G}eometry of the {B}irkhoff {P}olytope {I}. {T}he operator $\ell_n^p$-norms},
   JOURNAL = {Acta Sci. Math. (Szeged)},
  FJOURNAL = {Acta Sci. Math. (Szeged)},
      YEAR = {2024},
      Note = {https://doi.org/10.1007/s44146-024-00152-8}
}

@article {bmm2,
AUTHOR = {Bouthat,Ludovick and Mashreghi, Javad and Morneau-Guérin, Frédéric},
     TITLE = {On the {G}eometry of the {B}irkhoff {P}olytope {I}{I}. {T}he {S}chatten $p$-norms},
   JOURNAL = {Acta Sci. Math. (Szeged)},
  FJOURNAL = {Acta Sci. Math. (Szeged)},
      YEAR = {2024},
      NOTE = {https://doi.org/10.1007/s44146-024-00153-7},
}

@book {MR0281862,
    AUTHOR = {Rogers, C. A.},
     TITLE = {Hausdorff measures},
 PUBLISHER = {Cambridge University Press, London-New York},
      YEAR = {1970},
     PAGES = {viii+179},
   MRCLASS = {28.13},
  MRNUMBER = {281862},
MRREVIEWER = {Edwin\ Hewitt},
}

@article {Birkhoff1946,
    AUTHOR = {Birkhoff, Garrett},
     TITLE = {Tres observaciones sobre el algebra lineal},
   JOURNAL = {Univ. Nac. Tucum\'{a}n. Revista A.},
    VOLUME = {5},
      YEAR = {1946},
     PAGES = {147--151},
   MRCLASS = {09.1X},
  MRNUMBER = {0020547},
MRREVIEWER = {J. L. Dorroh},
}

@article {Schwarz1980,
	AUTHOR = {Schwarz, \v{S}tefan},
	TITLE = {Infinite products of doubly stochastic matrices},
	JOURNAL = {Acta Math. Univ. Comenian.},
	FJOURNAL = {Acta Mathematica Universitatis Comenianae},
	VOLUME = {39},
	YEAR = {1980},
	PAGES = {131--150},
	ISSN = {0862-9544},
	MRCLASS = {15A51 (60J10)},
	MRNUMBER = {619269},
	MRREVIEWER = {Ray C. Shiflett},
}

@article {Marcus1957,
    AUTHOR = {Marcus, Marvin},
     TITLE = {On subdeterminants of doubly stochastic matrices},
   JOURNAL = {Illinois J. Math.},
  FJOURNAL = {Illinois Journal of Mathematics},
    VOLUME = {1},
      YEAR = {1957},
     PAGES = {583--590},
      ISSN = {0019-2082},
   MRCLASS = {15.00},
  MRNUMBER = {95855},
MRREVIEWER = {J. H. Williamson},
       URL = {http://projecteuclid.org.acces.bibl.ulaval.ca/euclid.ijm/1255380681},
}

@book {HornJohnson2013,
    AUTHOR = {Horn, Roger A. and Johnson, Charles R.},
     TITLE = {Matrix analysis},
   EDITION = {Second},
 PUBLISHER = {Cambridge University Press, Cambridge},
      YEAR = {2013},
     PAGES = {xviii+643},
      ISBN = {978-0-521-54823-6},
   MRCLASS = {15-01},
  MRNUMBER = {2978290},
MRREVIEWER = {Mohammad Sal Moslehian},
}

@article{Cao2018GossipBasedLB,
  title={Gossip-Based Load Balance Strategy in Big Data Systems with Hierarchical Processors},
  author={Xiufeng Cao and Shu Gao and Liangchen Chen},
  journal={Wireless Personal Communications},
  year={2018},
  volume={98},
  pages={157-172}
}

@inproceedings{pmlr-v97-koloskova19a,
  title={Decentralized stochastic optimization and gossip algorithms with compressed communication},
  author={Koloskova, Anastasia and Stich, Sebastian and Jaggi, Martin},
  booktitle={International Conference on Machine Learning},
  pages={3478--3487},
  year={2019},
  organization={PMLR}
}

@INPROCEEDINGS{6855039,
  author={Lazzeretti, Riccardo and Horn, Steven and Braca, Paolo and Willett, Peter},
  booktitle={2014 IEEE International Conference on Acoustics, Speech and Signal Processing (ICASSP)},
  title={Secure multi-party consensus gossip algorithms},
  year={2014},
  volume={},
  number={},
  pages={7406-7410},
  doi={10.1109/ICASSP.2014.6855039}}

@article {Hunter2010,
    AUTHOR = {Hunter, Jeffrey J.},
     TITLE = {Some stochastic properties of ``semi-magic'' and ``magic''
              {M}arkov chains},
   JOURNAL = {Linear Algebra Appl.},
  FJOURNAL = {Linear Algebra and its Applications},
    VOLUME = {433},
      YEAR = {2010},
    NUMBER = {5},
     PAGES = {893--907},
      ISSN = {0024-3795},
   MRCLASS = {60J10 (60J20)},
  MRNUMBER = {2658640},
MRREVIEWER = {Ross P. Kindermann},
       DOI = {10.1016/j.laa.2010.04.021},
       URL = {https://doi-org.acces.bibl.ulaval.ca/10.1016/j.laa.2010.04.021},
}

@article {Perron1907,
	AUTHOR     = {Perron, Oskar},
	TITLE      = {Zur {T}heorie der {M}atrices},
	JOURNAL    = {Math. Ann.},
	FJOURNAL   = {Mathematische Annalen},
	VOLUME     = {64},
	YEAR       = {1907},
	NUMBER     = {2},
	PAGES      = {248--263},
	ISSN       = {0025-5831},
	MRCLASS    = {DML},
	MRNUMBER   = {1511438},
	DOI        = {10.1007/BF01449896},
	URL        = {https://doi-org.acces.bibl.ulaval.ca/10.1007/BF01449896},
}

@book{Frobenius1912,
	title      = {{\"U}ber Matrizen aus nicht negativen Elementen},
	author     = {Frobenius, Ferdinand Georg},
	isbn       = {9783111271903},
	series     = {Preussische Akademie der Wissenschaften Berlin: Sitzungsberichte der Preu{\ss}ischen Akademie der Wissenschaften zu Berlin},
	url        = {https://books.google.ca/books?id=fuK3PgAACAAJ},
	year       = {1912},
	publisher  = {Reichsdr.}
}

@article {Mirsky1965,
	AUTHOR = {Perfect, Hazel and Mirsky, Leon},
	TITLE = {Spectral properties of doubly-stochastic matrices},
	JOURNAL = {Monatsh. Math.},
	FJOURNAL = {Monatshefte f\"{u}r Mathematik},
	VOLUME = {69},
	YEAR = {1965},
	PAGES = {35--57},
	ISSN = {0026-9255},
	MRCLASS = {15.65},
	MRNUMBER = {175917},
	MRREVIEWER = {J. G. Mauldon},
	DOI = {10.1007/BF01313442},
	URL = {https://doi-org.acces.bibl.ulaval.ca/10.1007/BF01313442},
}

@article {Marcus1961,
	AUTHOR = {Marcus, Marvin and Minc, Henryk and Moyls, Benjamin},
	TITLE = {Some results on non-negative matrices},
	JOURNAL = {J. Res. Nat. Bur. Standards Sect. B},
	FJOURNAL = {Journal of Research of the National Bureau of Standards.
	Section B. Mathematics and Mathematical Physics},
	VOLUME = {65B},
	YEAR = {1961},
	PAGES = {205--209},
	ISSN = {0022-4340},
	MRCLASS = {15.60},
	MRNUMBER = {125124},
	MRREVIEWER = {L. Mirsky},
}

@book {Levin2009,
    AUTHOR = {Levin, David A. and Peres, Yuval and Wilmer, Elizabeth L.},
     TITLE = {Markov chains and mixing times},
      NOTE = {With a chapter by James G. Propp and David B. Wilson},
 PUBLISHER = {American Mathematical Society, Providence, RI},
      YEAR = {2009},
     PAGES = {xviii+371},
      ISBN = {978-0-8218-4739-8},
   MRCLASS = {60J10 (60-01 60J05 60K35 60K37 68U20 68W20)},
  MRNUMBER = {2466937},
MRREVIEWER = {Olle H\"{a}ggstr\"{o}m},
       DOI = {10.1090/mbk/058},
       URL = {https://doi-org.acces.bibl.ulaval.ca/10.1090/mbk/058},
}

@article {Sinkhorn,
    AUTHOR = {Sinkhorn, Richard and Knopp, Paul},
     TITLE = {Concerning nonnegative matrices and doubly stochastic
              matrices},
   JOURNAL = {Pacific J. Math.},
  FJOURNAL = {Pacific Journal of Mathematics},
    VOLUME = {21},
      YEAR = {1967},
     PAGES = {343--348},
      ISSN = {0030-8730},
   MRCLASS = {15.65},
  MRNUMBER = {210731},
MRREVIEWER = {J. G. Mauldon},
       URL = {http://projecteuclid.org.acces.bibl.ulaval.ca/euclid.pjm/1102992505},
}

@article {Maksimov1965,
    AUTHOR = {Maksimov, Valeri M.},
     TITLE = {On the relation between limit theorems on finite groups and an
              ergodic theorem for {M}arkov chains with doubly-stochastic
              transition matrix},
   JOURNAL = {Teor. Verojatnost. i Primenen},
  FJOURNAL = {Akademija Nauk SSSR. Teorija Verojatnoste\u{\i} i ee Primenenija},
    VOLUME = {10},
      YEAR = {1965},
     PAGES = {544--547},
      ISSN = {0040-361x},
   MRCLASS = {60.65 (60.08)},
  MRNUMBER = {0199893},
MRREVIEWER = {L. Schmetterer},
}

@article {Maksimov1970,
    AUTHOR = {Maksimov, Valeri M.},
     TITLE = {The convergence of nonhomogeneous doubly stochastic {M}arkov
              chains},
   JOURNAL = {Teor. Verojatnost. i Primenen.},
  FJOURNAL = {Akademija Nauk SSSR. Teorija Verojatnoste\u{\i} i ee Primenenija},
    VOLUME = {15},
      YEAR = {1970},
     PAGES = {622--636},
      ISSN = {0040-361x},
   MRCLASS = {60J10},
  MRNUMBER = {0298773},
MRREVIEWER = {G. E. Denzel},
}

@article {Du1987,
    AUTHOR = {Du, Hong Ke},
     TITLE = {The limit of powers of doubly stochastic matrices},
   JOURNAL = {Math. Practice Theory},
  FJOURNAL = {Mathematics in Practice and Theory. Shuxue de Shijian yu
              Renshi},
      YEAR = {1987},
    NUMBER = {3},
     PAGES = {76--77},
      ISSN = {1000-0984},
   MRCLASS = {15A51},
  MRNUMBER = {919339},
MRREVIEWER = {Shao Kuan Li},
}

@article {Hwang2001,
    AUTHOR = {Hwang, Suk-Geun and Pyo, Sung-Soo},
     TITLE = {Doubly stochastic matrices whose powers eventually stop},
   JOURNAL = {Linear Algebra Appl.},
  FJOURNAL = {Linear Algebra and its Applications},
    VOLUME = {330},
      YEAR = {2001},
    NUMBER = {1-3},
     PAGES = {25--30},
      ISSN = {0024-3795},
   MRCLASS = {15A51},
  MRNUMBER = {1826646},
       DOI = {10.1016/S0024-3795(01)00260-9},
       URL = {https://doi-org.acces.bibl.ulaval.ca/10.1016/S0024-3795(01)00260-9},
}

@article {Renier2009,
    AUTHOR = {R\'{e}nier, Simon},
     TITLE = {Characterization, parity, and power sequences of locally
              finite doubly stochastic matrices},
   JOURNAL = {Discrete Math.},
  FJOURNAL = {Discrete Mathematics},
    VOLUME = {309},
      YEAR = {2009},
    NUMBER = {23-24},
     PAGES = {6563--6571},
      ISSN = {0012-365X},
   MRCLASS = {15B51},
  MRNUMBER = {2558621},
MRREVIEWER = {Kent E. Morrison},
       DOI = {10.1016/j.disc.2009.07.005},
       URL = {https://doi-org.acces.bibl.ulaval.ca/10.1016/j.disc.2009.07.005},
}

@book {Fritz1979,
    AUTHOR = {Fritz, Franz-Josef and Huppert, Bertram and Willems, Wolfgang},
     TITLE = {Stochastische {M}atrizen},
    SERIES = {Hochschultext [University Textbooks]},
 PUBLISHER = {Springer-Verlag, Berlin-New York},
      YEAR = {1979},
     PAGES = {vii+192},
      ISBN = {3-540-09126-2},
   MRCLASS = {60J10 (15A51 60J20)},
  MRNUMBER = {519871},
MRREVIEWER = {E. Seneta},
}

@article {Hensel1926,
    AUTHOR = {Hensel, Kurt},
     TITLE = {\"{U}ber {P}otenzreihen von {M}atrizen},
   JOURNAL = {J. Reine Angew. Math.},
  FJOURNAL = {Journal f\"{u}r die Reine und Angewandte Mathematik. [Crelle's
              Journal]},
    VOLUME = {155},
      YEAR = {1926},
     PAGES = {107--110},
      ISSN = {0075-4102},
   MRCLASS = {DML},
  MRNUMBER = {1581075},
       DOI = {10.1515/crll.1926.155.107},
       URL = {https://doi-org.acces.bibl.ulaval.ca/10.1515/crll.1926.155.107},
}

\end{document}